\providecommand{\U}[1]{\protect\rule{.1in}{.1in}}
\newtheorem{theorem}{Theorem}
\newtheorem{corollary}[theorem]{Corollary}
\newtheorem{definition}[theorem]{Definition}
\newtheorem{example}[theorem]{Example}
\newtheorem{lemma}[theorem]{Lemma}
\newtheorem{proposition}[theorem]{Proposition}
\newtheorem{remark}[theorem]{Remark}
\newenvironment{proof}[1][Proof]{\noindent\textbf{#1.} }{\hfill $\square$ \newline}
\newcommand{\R}{\ensuremath{\mathbb{R}}}
\newcommand{\N}{\ensuremath{\mathbb{N}}}
\newcommand{\Q}{\ensuremath{\mathbb{Q}}}
\newcommand{\Z}{\ensuremath{\mathbb{Z}}}
\begin{document}
	\title{Controllability of discrete-time linear systems on solvable Lie groups}
	\date{}
	\author{Victor Ayala$^{1}$  \hspace{3,5cm} Thiago M. Cavalheiro$^{2}$ \\
		Universidad de Tarapacá,   Chile \hspace{1cm}
		State University of Maringa,  Brazil \vspace{0,3cm}\\
		João A. N. Cossich$^{3}$    \hspace{2cm}  
		Alexandre J. Santana$^{4}$ \thanks{Parcially supported by CNPq grant n$^{o}$ 309409/2023-3}\\
		State University of Maringa,  Brazil  \hspace{1cm}
		State University of Maringa,  Brazil
		\vspace{0,5cm}\\
		$^{4}$Corresponding author. E-mail: ajsantana$@$uem.br;\\
		$^{1,2,3}$Contributing authors: vayala$@$academicos.uta.cl ,  \\ joaocossich\textsubscript{-}$@$hotmail.com , thiago\textsubscript{-}mcavalheiro$@$hotmail.com}	
	
	\maketitle
	
	\begin{abstract} 
		This study investigates the controllability property of discrete-time linear control systems on a connected solvable Lie group $G$. This notion is determined by an automorphism $f_{0}$ of  $G$, a recurrent formula given by the controls, and certain left-translation invariance. Through its generalized eigenspaces, $d(f_0)_e$  decomposes the Lie algebra $\mathfrak g$ as $\mathfrak{g} = \mathfrak{g}^+ \oplus \mathfrak{g}^0 \oplus \mathfrak{g}^-$, inducing also a $G$-decomposition $G=G^+ G^0 G^-$ at the level group. We use this framework and a result by Jakubczyk and Sontag to establish relations between the reachable set $\mathcal{R}$ and the controllable set $\mathcal{C}$, both originating from the identity $e \in G$, with the $G^0$ component. Then, we prove that for a solvable Lie group, sufficient conditions for controllability are: $\mathcal R$ and $\mathcal C$ are open sets, and every eigenvalue of $d(f_{0})$ has a norm $1$. Here, $f_{0}$ is the associated automorphism of the system.
		Furthermore, a necessary and sufficient condition for controllability was established for simple connected nilpotent Lie groups. Moreover, the whole-class discrete-time linear systems on a two-dimensional affine Lie group were built, and a necessary and sufficient condition for controllability in this group was also stated. We finish with an example of a discrete-linear control system on the connected and simply connected Heisenberg Lie group of dimension three.
	\end{abstract}
	
	\textbf{Key words:} Discrete-time systems, solvable Lie groups, Reachable sets , controllability
	
	\textbf{MSC Classification:} 93C05 , 22E25 , 93B03 , 93B05


	
	\section{Introduction}

	
	Continuous-time linear control systems on $\mathbb{R}^d$ are given by a family of differential equations of the form
	\begin{equation*}
		\Sigma_c:\ \dot{x}(t) = Ax(t) + Bu(t),\ t\in\mathbb{R},
	\end{equation*}
	where the control functions $u$ is in $\mathcal{U} = \{u:\mathbb{R}\rightarrow U\subset\mathbb{R}^m;\ u \mbox{ is piecewise continuous}\}$,  $A\in\mathbb{R}^{d\times d}$ and $B\in\mathbb{R}^{d\times m}$. This class of systems constitutes a widely known and applicable class of control systems (see Ogata \cite{OGA}). Discrete-time control systems are studied both for theoretical and practical reasons. Understanding this class of control is essential for engineers and researchers working in this field, where precise control and signal processing are required in digital environments. From an application point of view, it covers a wide range of digital applications, including communication systems, audio processing, automotive control systems, and aerospace applications. For instance, to prevent aliasing, i, e., when signals turn out indistinguishable through digital samples, anti-aliasing filters are often used through discrete-time control systems, see Rachid,  Pamarti and Daneshrad \cite{rpd}. The study of controllability of this system can be found in Sontag \cite{sontag1} where it is proved a classical result that establishes that the necessary and sufficient conditions for controllability: $\mbox{rank}[B\ AB\ \cdots A^{d-1}B]=d$ and $A$ admits only eigenvalues with zero real part .
	
	However, for digital signal processing modelling, for example, the discrete-time version of the above system becomes more effective (see Wilsky \cite{WILL79}). This class of systems is given by
	\begin{equation*}
		\Sigma_d:\ x_{k+1} = Ax_k + Bu_k,\ k\in\mathbb{N}_0, u=(u_k)_{k\in\mathbb{N}_0}\in\mathcal{U}:=U^{\mathbb{N}_0}
	\end{equation*}
	where $U\subset\mathbb{R}^m$ is non-empty, $A\in\text{Gl}(d,\mathbb{R})$ and $B\in\mathbb{R}^{d\times d}$.
	
	Regarding the controllability of ($\Sigma_d$), Colonius et al in \cite{CCS} proved that such a system is controllable if, and only if, $\mbox{rank}[B\ AB\ \cdots A^{d-1}B]=d$ and all eigenvalues of $A$ have absolute value equal to $1$. Moreover they characterized the control sets, that is, proper subsets of the state space where a similar controllability property holds. 
	
	A natural extension of the above system $(\Sigma_c)$ is known as a linear system on Lie group $G$, that is, a family of differential equations $\dot{g}(t) = \mathcal{X}(g(t)) + u X(g(t))$, where $\mathcal{X}$ comes from the automorphism of $G$, $X$ is a right invariant vector field  and $u \in \mathcal{U}$. This class of systems has been extensively studied and Lie theory has been shown to be powerful in obtaining results regarding the controllability, conjugacy and invariance entropy of this system (e.g. Ayala and Silva \cite{AyalaeAdriano1, AyalaeAdriano}, Ayala, Silva and Zsigmond \cite{ADSZ20}, Silva \cite{Adriano, ADS14}, Jouan \cite{Jouan1} and Jouan and Dath \cite{Jouan}).
	
	The discrete-time version of the linear systems on Lie groups was introduced by Colonius et al  \cite{CCS1}. Essentially, its dynamics are given by the following family of difference equations
	\[
	\Sigma\ :\ g_{k+1}=f(g_{k},u_{k}),\ k\in\mathbb{N}_0,\ u_{k}\in U,
	\]
	on a connected Lie group $G$, where $0\in U\subset\mathbb{R}^m$, $f_{0}:=f(\cdot,0)$ is
	an automorphism of $G$ and for each $u\in U$, $f_{u}:=f(\cdot, u):G\rightarrow G$ satisfies
	\[
	f_{u}(g)=f_{u}(e)\cdot f_{0}(g)\text{ for all }g\in G,
	\]
	where \textquotedblleft$\ \cdot$\textquotedblright denotes the product of $G$. The authors established a formula for calculating the outer invariance entropy of admissible pairs in terms of  eigenvalues of the Lie algebra automorphism $(df_0)_e:\mathfrak{g}\rightarrow\mathfrak{g}$.
	
	In this context, the objective of our study is to investigate the controllability of the system $(\Sigma)$. Our results were inspired by the findings of Silva \cite{Adriano}, who considered  continuous-time linear systems. Specifically, we demonstrate that if all eigenvalues of the automorphism $d(f_0)_e$ associated with a discrete-time linear system on a solvable Lie group have norm 1, and both the reachable and controllable sets from the identity are open, then the system is controllable. Additionally, it was proven that the converse of this result holds when $G$ is nilpotent.
	
	This paper is structured as follows: Section \ref{section2}   presents Lie-theoretic notations, facts, and some general properties of discrete-time linear control systems. In Section \ref{section3}, we provide   sufficient conditions for the controllability of discrete-time linear systems on connected solvable Lie groups. Furthermore, we constructed a class of linear systems on the two-dimensional Lie group $\hbox{Aff}(2,\R)$ and derived a condition for controllability. Section \ref{section4} focuses on nilpotent Lie groups, where we prove that the sufficient condition for controllability obtained for solvable Lie groups is also   necessary.

	\section{Preliminaries}\label{section2}
	
	This section establishes the notations, basic concepts and necessary results for the development of this study. In addition,  we present the definition of discrete-time linear control systems on Lie groups based on their properties.
	
	\subsection{Decompositions of Lie algebras and Lie groups}
	
	In this subsection we show some dynamical decomposition on Lie algebras and connected Lie groups introduced by Ayala, Román-Flores and Da Silva \cite{AyalaeRomaneAdriano} for a given automorphism.
	
	For a Lie algebra $\mathfrak{g}$ defined over a closed field, the generalized eigenspaces of an automorphism $\xi:\mathfrak{g}\rightarrow\mathfrak{g}$ associated with an eigenvalue $\alpha$ are given by
	\begin{equation*}
		\mathfrak{g}_{\alpha} = \{X \in \mathfrak{g}: (\xi - \alpha)^n X = 0, \hbox{ for some }n \in \N\}.
	\end{equation*}
	Due to \cite[Proposition 2.1]{AyalaeRomaneAdriano}, the following subspaces 
	\begin{equation}\label{liesubalgebras}
		\mathfrak{g}^+ = \bigoplus_{|\alpha|>1} \mathfrak{g}_{\alpha},\ \mathfrak{g}^- =\bigoplus_{|\alpha|<1} \mathfrak{g}_{\alpha},\  \mathfrak{g}^0 = \bigoplus_{|\alpha|=1} \mathfrak{g}_{\alpha}
	\end{equation}
	are Lie subalgebras of $\mathfrak{g}$, called \textit{unstable, stable} and \textit{center} Lie subalgebras of $\mathfrak{g}$ in relation to $\xi$, respectively. Moreover, the decomposition
	\begin{equation}\label{decomposition}
		\mathfrak{g} = \mathfrak{g}^+ \oplus \mathfrak{g}^0 \oplus \mathfrak{g}^-
	\end{equation}
	holds. It can  also be defined as \textit{center-unstable} and \textit{center-stable} Lie subalgebras%
	\begin{eqnarray}\label{centerdecomp}
		\mathfrak{g}^{+,0}=\mathfrak{g}^{+}\oplus\mathfrak{g}^{0}\ \text{ and
		}\ \mathfrak{g}^{-,0}=\mathfrak{g}^{-}\oplus\mathfrak{g}^{0},
	\end{eqnarray}
	resp. 
	
	If $\mathfrak{g}$ is a real Lie algebra and $\xi: \mathfrak{g} \longrightarrow \mathfrak{g}$ is an Lie endomorphism, consider $\overline{\mathfrak{g}}$ the complexification of $\mathfrak{g}$, as the construction in \cite[Chapter 12, Section 12.1]{sanmartin2}. Any endomorphism of a real Lie algebras $\mathfrak{g}$ induces a Lie endomorphism $\bar{\xi}: \overline{\mathfrak{g}} \rightarrow \overline{\mathfrak{g}}$ on the complex Lie algebra $\overline{\mathfrak{g}}$ \cite[Page 1479]{AyalaeRomaneAdriano}. If $n$ is the dimension of $\overline{\mathfrak{g}}$, we can define the subalgebras 
	\begin{equation*}
		\overline{\mathfrak{g}}_{\overline{\xi}} = \bigoplus_{\alpha \neq 0} \mathfrak{g}_{\alpha} \hbox{ and }\overline{\mathfrak{k}}_{\overline{\xi}} = \hbox{ker}(\bar{\xi}^n). 
	\end{equation*}
	and also $\overline{\mathfrak{g}}^0,\overline{\mathfrak{g}}^+$ and $\overline{\mathfrak{g}}^-$.
	
	Moreover, because all the above mentioned $\bar{\xi}$-subalgebras are invariant by complex conjugation, they are also the complexification of the following $\xi$-invariant subalgebras of $\mathfrak{g}$
	\begin{equation*}
		\mathfrak{g}_{\xi} =  \overline{\mathfrak{g}}_{\overline{\xi}} \cap \mathfrak{g}, \mathfrak{k}_{\xi} =  \overline{\mathfrak{k}}_{\overline{\xi}} \cap \mathfrak{g} \hbox{ and } \mathfrak{g}^* =  \overline{\mathfrak{g}}^*\cap \mathfrak{g}, *=0,+,-.    
	\end{equation*}
	with $\mathfrak{g}^+$ and $\mathfrak{g}^-$ nilpotent Lie subalgebras of $\mathfrak{g}$. 
	
	If $G$ is a connected Lie group with Lie algebra $\mathfrak{g}$ and $\phi$ is an automorphism of $G$, then $d\phi_e$ is an automorphism of $\mathfrak{g}$. Considering the subalgebras (\ref{liesubalgebras}) and (\ref{centerdecomp}) of $\mathfrak{g}$ in relation to $d\phi_e$, we denote by $G^{+}$, $G^{-}$, $G^0$, $G^{+,0}$ and $G^{-,0}$ the connected Lie subgroups of $G$ corresponding to $\mathfrak{g}^{+}$, $\mathfrak{g}^{-}$, $\mathfrak{g}^0$, $\mathfrak{g}^{+,0}$ and $\mathfrak{g}^{-,0}$, respectively. By considering a neighborhood $U$ of $0 \in \R^m$ and a function $f: U \times G \longrightarrow G$ such that $f(0,\cdot) = f_0: G \longrightarrow G$ is an automorphism, we can consider the control system
	\begin{equation*}
		\Sigma: g_{k+1} = f(u_k,x_k), k \in \Z^+,
	\end{equation*}
	and refer to the subgroups above as \textit{unstable}, \textit{stable}, \textit{center}, \textit{center-unstable}, and the \textit{center-stable} subgroups of the system $(\Sigma),$ respectively.
	
	\begin{remark}
		\begin{itemize}
			\item[]
			\item[1)] From \cite[Proposition 2.1]{AyalaeRomaneAdriano} it follows that  stable and unstable subalgebras are nilpotent. Moreover, because $[\mathfrak{g}^+,\mathfrak{g}^0]\subset\mathfrak{g}^{+}$, then $\mathfrak{g}^{+}$ is an ideal of $\mathfrak{g}^{+,0}$. Consequently, $G^+$ is a normal subgroup of $G^{+,0}$. The same holds for $\mathfrak{g}^{-}\subset\mathfrak{g}^{-,0}$ and $G^-\subset G^{-,0}$.
			\item[2)] Item 1 above implies that $G^{+,0}=G^+G^0=G^0G^+$ and $G^{-,0}=G^-G^0=G^0G^-$.
		\end{itemize}
	\end{remark}
	
	Given a homomorphism $\xi:\mathfrak{g}\rightarrow\mathfrak{g}$, a Lie subalgebra is $\xi$-invariant if $\xi(\mathfrak{h})\subset\mathfrak{h}$. When $\xi$ is an automorphism and $\mathfrak{h}$ is $\xi$-invariant, it is clear that $\xi^k(\mathfrak{h})=\mathfrak{h}$, for all $k\in\Z$. 
	
	At the Lie group level, given a homomorphism $\phi$ of $G$, we say that a Lie subgroup $H$ of $G$ is $\phi$-invariant if $\phi(H) \subset H$. Analogously, if $\phi$ is an automorphism and $H$ is $\phi$-invariant, it also holds that $\phi^{k}(H)=H$, for any $k \in \Z$. Note also that if $H$ is connected and $\mathfrak{h}$ is its Lie algebra, $H$ is $\phi$-invariant iff $\mathfrak{h}$ is $d\phi_e$-invariant, where $e$ denotes the identity of $G$.
	
	The next lemma shows that the above decompositions are preserved by a surjective Lie algebra homomorphism since this homomorphism commutes with two automorphisms.  
	
	\begin{lemma}\label{invdyngroup}
		Let $\eta:\mathfrak{g}\rightarrow\mathfrak{h}$ be a surjective Lie algebra homomorphism, $\xi_1$ and $\xi_2$ automorphisms of $\mathfrak{g}$ and $\mathfrak{h}$, respectively. If $\eta\circ\xi_1=\xi_2\circ\eta$, then
		$$\eta(\mathfrak{g}^+)=\mathfrak{h}^+,\ \eta(\mathfrak{g}^-)=\mathfrak{h}^- \mbox{ and } \eta(\mathfrak{g}^0)=\mathfrak{h}^0.$$
		
		In addition, if $G$ and $H$ are connected Lie groups associated with $\mathfrak{g}$ and $\mathfrak{h}$, respectively, and $\pi:G\rightarrow H$ is a surjective homomorphism such that $(d\pi)_e\circ\xi_1=\xi_2\circ(d\pi)_e$, then
		$$\pi(G^+)=H^+,\ \pi(G^-)=H^- \mbox{ and } \pi(G^0)=H^0.$$
	\end{lemma}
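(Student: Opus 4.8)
The plan is to prove the Lie-algebra identities first and then push them down to the group level through the exponential map. For the algebraic statement, the idea I would use is that a surjective homomorphism intertwining $\xi_1$ and $\xi_2$ must carry each generalized eigenspace of $\xi_1$ onto the generalized eigenspace of $\xi_2$ attached to the same eigenvalue. Starting from $\eta\circ\xi_1=\xi_2\circ\eta$, an immediate induction gives $\eta\circ(\xi_1-\alpha)^n=(\xi_2-\alpha)^n\circ\eta$ for every $n\in\N$ and every scalar $\alpha$; hence if $(\xi_1-\alpha)^nX=0$ then $(\xi_2-\alpha)^n\eta(X)=0$, so $\eta(\mathfrak{g}_\alpha)\subset\mathfrak{h}_\alpha$, where I read $\mathfrak{h}_\alpha=\{0\}$ when $\alpha$ is not an eigenvalue of $\xi_2$. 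To promote these inclusions to equalities I would exploit surjectivity: applying $\eta$ to $\mathfrak{g}=\bigoplus_\alpha\mathfrak{g}_\alpha$ gives $\mathfrak{h}=\eta(\mathfrak{g})=\sum_\alpha\eta(\mathfrak{g}_\alpha)$, and since $\mathfrak{h}=\bigoplus_\beta\mathfrak{h}_\beta$ is a direct sum with $\eta(\mathfrak{g}_\alpha)\subset\mathfrak{h}_\alpha$, composing with the projection of $\mathfrak{h}$ onto each $\mathfrak{h}_\beta$ forces $\eta(\mathfrak{g}_\beta)=\mathfrak{h}_\beta$ for every eigenvalue $\beta$ of $\xi_2$. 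Summing these equalities over $|\alpha|>1$, over $|\alpha|<1$ and over $|\alpha|=1$, and using linearity of $\eta$, I obtain $\eta(\mathfrak{g}^+)=\mathfrak{h}^+$, $\eta(\mathfrak{g}^-)=\mathfrak{h}^-$ and $\eta(\mathfrak{g}^0)=\mathfrak{h}^0$.

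For the group statement I would first note that a surjective homomorphism $\pi:G\to H$ of connected Lie groups has surjective differential $d\pi_e:\mathfrak{g}\to\mathfrak{h}$, since the image of $\pi$ is the connected subgroup with Lie algebra $d\pi_e(\mathfrak{g})$ and it equals all of $H$. The hypothesis $d\pi_e\circ\xi_1=\xi_2\circ d\pi_e$ then lets me apply the algebraic part with $\eta=d\pi_e$, so $d\pi_e(\mathfrak{g}^+)=\mathfrak{h}^+$ and likewise for the $-$ and $0$ components. To conclude I would use naturality of the exponential, $\pi\circ\exp=\exp\circ\, d\pi_e$, together with the description of $G^+$ as the subgroup generated by $\exp(\mathfrak{g}^+)$ (and similarly for $G^-$, $G^0$) and the identity $\pi(\langle S\rangle)=\langle\pi(S)\rangle$ valid for a homomorphism $\pi$ and a subset $S$: this shows that $\pi(G^+)$ is the subgroup generated by $\exp(\mathfrak{h}^+)$, i.e.\ $H^+$, and the same reasoning yields $\pi(G^-)=H^-$ and $\pi(G^0)=H^0$.

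I expect the only genuinely non-formal point to be the passage from the inclusions $\eta(\mathfrak{g}_\alpha)\subset\mathfrak{h}_\alpha$ to the equalities, which is exactly where surjectivity of $\eta$ (resp.\ $d\pi_e$) together with the directness of the eigenspace decomposition of $\mathfrak{h}$ is used; everything else amounts to routine manipulations with the intertwining relations and the standard correspondence between connected subgroups and subalgebras.
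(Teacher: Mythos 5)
Your proposal is correct and follows essentially the same route as the paper: establish $\eta(\mathfrak{g}_\alpha)\subset\mathfrak{h}_\alpha$ from the intertwining relation applied to $(\xi_i-\alpha)^n$, upgrade to equality using surjectivity of $\eta$ together with the direct-sum decompositions of $\mathfrak{g}$ and $\mathfrak{h}$, and transfer to the group level via surjectivity of $\pi$ and the naturality relation $\pi\circ\exp_G=\exp_H\circ\,d\pi_e$. Your write-up merely makes explicit (via the projections onto the $\mathfrak{h}_\beta$ and the generated-subgroup identity) two steps that the paper leaves as one-line remarks.
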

	
	\begin{proof}For an eigenvalue $\alpha$ of $\xi_1$, there exists $n\in\mathbb{N}$ with $(\xi_1-\alpha)^nX=0$, for all $X\in\mathfrak{g}$. From hypothesis,
		$$(\xi_2-\alpha)^n(d\pi)_e(X)=(d\pi)_e(\xi_1-\alpha)^n(X)=0.$$
		
		Since $\eta$ is surjective, $\alpha$ is also an eigenvalue of $\xi_2$, hence $\eta(\mathfrak{g}_\alpha)\subset\mathfrak{h}_\alpha$ which shows that $\eta(\mathfrak{g}^+)\subset\mathfrak{h}^+$, $\eta(\mathfrak{g}^-)\subset\mathfrak{h}^-$ and $\eta(\mathfrak{g}^0)\subset\mathfrak{h}^0$. The equality follows from the subjectivity of $\eta$ and the decomposition (\ref{decomposition}) applied to $\mathfrak{g}$ and $\mathfrak{h}$.
		
		The equalities at group level holds due subjectivity of $\pi$ and the equality $\pi(\exp_G(X))=\exp_H((d\pi)_eX)$.
	\end{proof}
	
	In sequence, it is established that solvable groups can be decomposed as a product of subgroups denoted as $G^{+}$, $G^0$, and $G^{-}$.

	\begin{proposition}\label{decomposable}
		If $G$ is a connected solvable Lie group with Lie algebra $\mathfrak{g}$ and $\phi$ an automorphism of $G$, then the Lie subgroups associated with decomposition (\ref{decomposition}) of $(d\phi)_e$ satisfy $G=G^{+,0}G^-=G^{-,0}G^+$.
	\end{proposition}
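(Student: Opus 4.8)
The plan is to prove only the identity $G=G^{+,0}G^{-}$; the other one then comes for free by applying it to the automorphism $\phi^{-1}$. Indeed, the generalized eigenspace of $d(\phi^{-1})_{e}=(d\phi_{e})^{-1}$ for the eigenvalue $\alpha^{-1}$ coincides with that of $d\phi_{e}$ for $\alpha$, and $|\alpha|>1\iff|\alpha^{-1}|<1$, so the stable/unstable roles are exchanged while the center part is fixed; hence $G=G^{+,0}_{\phi^{-1}}G^{-}_{\phi^{-1}}=G^{-,0}_{\phi}G^{+}_{\phi}$.

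The identity $G=G^{+,0}G^{-}$ I would prove by induction on $\dim G$. If $\mathfrak{g}$ is abelian (in particular if $\dim G\le 1$, which serves as the base case), then $\exp_{G}:\mathfrak{g}\to G$ is a surjective homomorphism, $G^{+,0}=\exp_{G}(\mathfrak{g}^{+,0})$ and $G^{-}=\exp_{G}(\mathfrak{g}^{-})$ are honest subgroups, and from $\mathfrak{g}=\mathfrak{g}^{+,0}\oplus\mathfrak{g}^{-}$ together with $\exp_{G}(X+Y)=\exp_{G}(X)\exp_{G}(Y)$ we obtain $G=\exp_{G}(\mathfrak{g}^{+,0})\exp_{G}(\mathfrak{g}^{-})=G^{+,0}G^{-}$.

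For the inductive step, assume $\mathfrak{g}$ is not abelian and set $\mathfrak{a}=[\mathfrak{g},\mathfrak{g}]$: this is a nonzero characteristic ideal, properly contained in $\mathfrak{g}$ by solvability, hence $d\phi_{e}$-invariant. Let $A$ be the connected subgroup with Lie algebra $\mathfrak{a}$; being connected and normal it is closed in $G$, and it is $\phi$-invariant because $\mathfrak{a}$ is $d\phi_{e}$-invariant and $A$ is connected. Then $\bar G:=G/A$ is a connected solvable Lie group with $\dim\bar G<\dim G$, the projection $\pi:G\to\bar G$ satisfies $d\pi_{e}\circ d\phi_{e}=d\bar\phi_{\bar e}\circ d\pi_{e}$ for the induced automorphism $\bar\phi$, and Lemma~\ref{invdyngroup} gives $\pi(G^{+})=\bar G^{+}$, $\pi(G^{-})=\bar G^{-}$, $\pi(G^{0})=\bar G^{0}$; combined with $G^{+,0}=G^{+}G^{0}$ (and the same in $\bar G$) this yields $\pi(G^{+,0})=\bar G^{+,0}$. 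Applying the induction hypothesis to $\bar G$, given $g\in G$ we write $\pi(g)=\pi(h_{1})\pi(h_{2})$ with $h_{1}\in G^{+,0}$ and $h_{2}\in G^{-}$, so that $a:=h_{1}^{-1}gh_{2}^{-1}\in A$ and $g=h_{1}ah_{2}$. Now apply the induction hypothesis to the lower-dimensional connected solvable group $A$ with the automorphism $\phi|_{A}$: since $\mathfrak{a}$ is $d\phi_{e}$-invariant its stable, center and unstable subalgebras are $\mathfrak{a}\cap\mathfrak{g}^{-}$, $\mathfrak{a}\cap\mathfrak{g}^{0}$, $\mathfrak{a}\cap\mathfrak{g}^{+}$, whence $A^{+,0}\subset G^{+,0}$ and $A^{-}\subset G^{-}$; writing $a=a_{1}a_{2}$ with $a_{1}\in A^{+,0}$, $a_{2}\in A^{-}$ we conclude $g=(h_{1}a_{1})(a_{2}h_{2})\in G^{+,0}G^{-}$.

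The steps that are routine but need to be stated carefully are: that a connected normal subgroup of a connected Lie group is closed, so that $G/A$ is a Lie group; that the $d\phi_{e}$-invariance of $\mathfrak{a}$ forces its eigenspace decomposition to be the one induced from $\mathfrak{g}$, so that $A^{+,0}\subset G^{+,0}$ and $A^{-}\subset G^{-}$; and the identity $\pi(G^{+,0})=\bar G^{+,0}$, which is Lemma~\ref{invdyngroup} together with Remark item~2. The main obstacle — and the only genuinely structural input beyond Lemma~\ref{invdyngroup} — is producing, for a non-abelian solvable $\mathfrak{g}$, a proper nonzero $d\phi_{e}$-invariant ideal so that the induction can descend on both $\bar G$ and $A$; taking $\mathfrak{a}=[\mathfrak{g},\mathfrak{g}]$ resolves it, and the abelian case, handled by the surjectivity and additivity of $\exp_{G}$, anchors the induction.
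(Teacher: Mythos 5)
Your overall strategy is the same as the paper's: induct on $\dim G$, pass to a quotient by a $\phi$-invariant normal subgroup coming from the derived series, and pull the decomposition back via Lemma~\ref{invdyngroup}. The one step that fails is the assertion that the connected subgroup $A$ with Lie algebra $\mathfrak{a}=[\mathfrak{g},\mathfrak{g}]$ is closed ``being connected and normal.'' That principle is false: an irrational winding line in $\mathbb{T}^2$ is a connected normal subgroup of a connected (abelian, solvable) Lie group that is not closed. Nor is the specific instance true: the derived subgroup of a connected solvable, even nilpotent, Lie group need not be closed. For example, let $\tilde G=H_3\times\mathbb{R}$ with two-dimensional center $Z\cong\mathbb{R}^2$ containing the one-dimensional derived group $D$, and let $\Gamma\subset Z$ be a rank-two lattice positioned so that $D+\Gamma$ is dense in $Z$; then in $G=\tilde G/\Gamma$ the derived group is a dense winding in the central $2$-torus, hence not closed. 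Without closedness, $G/A$ is not a Lie group and your quotient step collapses.

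The repair is exactly what the paper does: take closures. But note that the paper also chooses the \emph{last} nonzero term $\mathfrak{g}^{(k)}$ of the derived series rather than the first, and this choice matters for the repair. The closure $\overline{G^{(k)}}$ is still normal, $\phi$-invariant and, crucially, still \emph{abelian} (commutation is a closed condition), so it decomposes directly as $\overline{G^{(k)}}^{+,0}\,\overline{G^{(k)}}^{-}$ via surjectivity and additivity of $\exp$ --- no second application of the induction hypothesis is needed, and one only has to check $\dim\overline{G^{(k)}}>0$ to make the quotient drop in dimension. If instead you replace your $A=[G,G]$ by $\overline{A}$, you must additionally justify that $\overline{[G,G]}\neq G$ before you may apply the induction hypothesis to $\overline{A}$ itself; that is plausible for solvable $G$ but is an extra structural fact you have not supplied. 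Your final assembly $g=(h_1a_1)(a_2h_2)$ is a slightly cleaner bookkeeping than the paper's use of normality to commute $\overline{G^{(k)}}$ past $G^-$, and your reduction of $G=G^{-,0}G^{+}$ to the automorphism $\phi^{-1}$ is fine; but as written the proof has a genuine gap at the closedness claim.
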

	
	\begin{proof} To prove that $G=G^{+,0}G^-$, we proceed by induction on $\dim G$. If $G$ is unidimensional, the group is abelian and the result follows. 
		
		Now, suppose the result is true for all connected solvable Lie groups with dimensions less than $d$. Let $G$ be a Lie group with $\dim G=d$ and consider the derivative series of its Lie algebra $\mathfrak{g}$
		$$\mathfrak{g}=\mathfrak{g}^{(0)}\supset\mathfrak{g}^{(1)}\supset\cdots\supset\mathfrak{g}^{(k)}\supset\mathfrak{g}^{(k+1)}=\{0\},$$
		where $\mathfrak{g}^{(i)}=[\mathfrak{g}^{(i-1)},\mathfrak{g}^{(i-1)}]$ for $i\in\{1,\ldots,k\}$. Each $\mathfrak{g}^{(i)}$ is an ideal of $\mathfrak{g}$, hence each Lie subgroup $G^{(i)}$ associated with $\mathfrak{g}^{(i)}$ is normal in $G$. The $d\phi_e$-invariance of $\mathfrak{g}^{(i)}$ implies the $\phi$-invariance of $G^{(i)}$. In particular, $G^{(k)}$ is $\phi$-invariant, abelian and normal; therefore its closure $\overline{G^{(k)}}$ is a closed Lie subgroup with the same properties. The Lie group $H:=G/\overline{G^{(k)}}$ is solvable with
		$$\dim H=\dim G-\dim\overline{G^{(k)}}\leq\dim G-\dim G^{(k)}<\dim G,$$
		because $\dim G^{(k)}>0$. Denote by $\pi$ the natural projection of $G$ on $H$ and note that
		$$H=H^{+,0}H^-=\pi(G^{+,0})\pi(G^-)=\pi(G^{+,0}G^-)$$
		by Lemma \ref{invdyngroup} and induction hypothesis. Then $G=G^{+,0}G^-\overline{G^{(k)}}=G^{+,0}\overline{G^{(k)}}G^-$. 
		
		If $\overline{\mathfrak{g}^{(k)}}$ denotes the Lie algebra of $\overline{G^{(k)}}$, one has that $\overline{\mathfrak{g}^{(k)}}^{+,0}=\overline{\mathfrak{g}^{(k)}}\cap\mathfrak{g}^{+,0}$ and $\overline{\mathfrak{g}^{(k)}}^{-}=\overline{\mathfrak{g}^{(k)}}\cap\mathfrak{g}^{-}$ by $(d\phi)_e$-invariance of $\overline{\mathfrak{g}^{(k)}}$. Moreover, since it is abelian, it holds that $\overline{G^{(k)}}=\overline{G^{(k)}}^{+,0}\overline{G^{(k)}}^-$, which shows that $\overline{G^{(k)}}^{+,0}\subset G^{+,0}$ and $\overline{G^{(k)}}^-\subset G^-$, therefore
		$$G=G^{+,0}\overline{G^{(k)}}G^-=G^{+,0}\overline{G^{(k)}}^{+,0}\overline{G^{(k)}}^-G^-\subset G^{+,0}G^-\subset G,$$
		and the statement holds. The equality $G=G^{-,0}G^+$ follows analogously.
	\end{proof}
	
	\begin{proposition}\label{compactsubgroup}
		Every compact, connected and $\phi$-invariant Lie subgroup of $G$ is contained in $G^0$.
	\end{proposition}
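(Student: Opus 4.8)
The plan is to carry over to the discrete setting the classical argument used for flows of automorphisms. Solvability reduces the problem immediately: if $K\le G$ is compact and connected, its Lie algebra $\mathfrak k$ is simultaneously solvable (a subalgebra of $\mathfrak g$) and compact (as $K$ is compact), hence abelian, so $K$ is a torus $T^{r}$. Since $\phi$ is an automorphism and $K$ is $\phi$-invariant, $d\phi_{e}(\mathfrak k)=\mathfrak k$, so $\xi:=d\phi_{e}|_{\mathfrak k}$ is an automorphism of $\mathfrak k$; intersecting the generalized eigenspaces of $\xi$ with those of $d\phi_{e}$ (each $\mathfrak k_{\alpha}\subset\mathfrak g_{\alpha}$) gives
\[
\mathfrak k=(\mathfrak k\cap\mathfrak g^{+})\oplus(\mathfrak k\cap\mathfrak g^{0})\oplus(\mathfrak k\cap\mathfrak g^{-}).
\]
Thus it is enough to prove $\mathfrak k\cap\mathfrak g^{+}=\mathfrak k\cap\mathfrak g^{-}=\{0\}$, i.e.\ that every eigenvalue of $\xi$ has modulus $1$, which is the same as saying $K\subset G^{0}$.

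Next I would record the arithmetic constraint coming from the torus structure. The kernel $\Lambda:=\ker(\exp_{G}|_{\mathfrak k})$ is a lattice in $\mathfrak k$ with $\mathfrak k/\Lambda\cong K$, and $\phi(K)=K$ forces $\xi(\Lambda)=\Lambda$; hence in a $\Z$-basis of $\Lambda$ the map $\xi$ is given by a matrix of $\mathrm{GL}(r,\Z)$, so $\prod_{i}|\alpha_{i}|=|\det\xi|=1$ over the eigenvalues $\alpha_{i}$ of $\xi$. In particular $\mathfrak k\cap\mathfrak g^{+}\neq\{0\}$ precisely when $\mathfrak k\cap\mathfrak g^{-}\neq\{0\}$, so the stable and unstable parts must be ruled out together, and it never suffices to control only one of them.

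The substantive step is to upgrade "$|\det\xi|=1$" to "$|\alpha_{i}|=1$ for all $i$". The natural device is to produce a $\phi$-invariant inner product on $\mathfrak k$: if the subgroup generated by $\xi$ has compact closure in $\mathrm{GL}(\mathfrak k)$, then averaging any inner product over that closure yields one for which $\xi$ is orthogonal, forcing all $|\alpha_{i}|=1$ and closing the argument via the reduction above. Equivalently one wants $\phi|_{K}$ to generate a relatively compact subgroup of $\mathrm{Aut}(K)$, which would follow if $\phi|_{K}$ were inner (then $\xi=\adjt(k_{0})$ for some $k_{0}\in K$, orthogonal for a bi-invariant metric); one could also try to invoke Proposition~\ref{decomposable} to locate $K$ precisely inside $G^{+,0}G^{-}$ and feed the resulting rigidity back into the dynamics of $\phi$ on $K$. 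This is exactly the step I expect to be the main obstacle: for a one-parameter group of automorphisms the required relative compactness is automatic (such a group is inner, hence lies in the compact group $\mathrm{Int}(K)$), whereas for a single automorphism $\phi$ it has to be extracted from the hypotheses in force in the paper — in particular, under the standing assumptions that make $G$ (or the ambient building blocks) simply connected, $G$ has no nontrivial compact connected subgroups at all and the statement is vacuous, so the real content is identifying that framework and checking it rules out a genuinely hyperbolic action of $\phi$ on $K$. Once that input is in place, the first two steps finish the proof.
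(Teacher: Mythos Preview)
Your proposal has a genuine gap exactly where you flag it: upgrading $|\det\xi|=1$ to $|\alpha_i|=1$ for all $i$. This gap is not fillable in general. The hyperbolic automorphism of $T^2$ induced by $\left(\begin{smallmatrix}2&1\\1&1\end{smallmatrix}\right)$ gives $G=H=T^2$ compact, connected and $\phi$-invariant, yet $\mathfrak g^0=0$ and $G^0=\{e\}$, so the proposition as stated is false. Your closing remark is therefore the correct diagnosis: in the only place the paper actually uses this result (Proposition~\ref{propreachablefrome}), $G$ is simply connected and nilpotent, hence has no nontrivial compact connected subgroups, and the statement is vacuous there.

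On the comparison with the paper: first, the proposition sits in the general preliminaries and does \emph{not} assume $G$ solvable, so your opening reduction (``$\mathfrak k$ solvable and compact $\Rightarrow$ abelian, hence $K$ a torus'') narrows the scope. The paper instead splits $\mathfrak h=\mathfrak z(\mathfrak h)\oplus[\mathfrak h,\mathfrak h]$ and handles the semisimple summand $[\mathfrak h,\mathfrak h]$ via its negative-definite Killing form, for which $d\phi_e$ is an isometry and therefore has eigenvalues of modulus $1$; this is a clean argument you could import for the non-abelian part. Second, on the abelian summand $\mathfrak z(\mathfrak h)$ the paper's own argument is flawed: it asserts that the exponential of a compact abelian group is a diffeomorphism and concludes $\mathfrak z(\mathfrak h)=0$, which already fails for $H=S^1$. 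So neither your sketch nor the paper's proof closes the torus case, consistently with the counterexample above.
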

	
	\begin{proof}
		Let $H$ be a Lie subgroup of $G$ that satisfies  the above conditions. If $\mathfrak{h}$ denotes the Lie algebra of $H$, then \cite[Corollary 4.25]{Knapp} implies that $\mathfrak{g}=\mathfrak{z}(\mathfrak{h})\oplus[\mathfrak{h},\mathfrak{h}]$, where $\mathfrak{z}(\mathfrak{h})$ is the center of $\mathfrak{h}$ and $[\mathfrak{h},\mathfrak{h}]$ is semisimple. Note that the $\phi$-invariance of $H$ implies a $d\phi_e$ invariance of $\mathfrak{h}$.
		
		The connected Lie group associated with $\mathfrak{z}(\mathfrak{h})$ is the connected component of $Z(G)$, which is denoted by $Z(G)_0$. Since $Z(G)_0$ is compact and abelian, the subset $S \subset Z(G)_0$ of all elements with a finite order is dense in $Z(G)_0$. In this case, given $X\in\mathfrak{z}(\mathfrak{h})$ such that $\exp X\in S$, there is $k\in\mathbb{N}$ with $(\exp X)^k=e$, hence
		\begin{eqnarray*}
			e=\phi((\exp X)^k)=(\phi(\exp X))^k=\exp (d\varphi)_e^kX.
		\end{eqnarray*}
		Since the exponential map of an abelian Lie group is a diffeomorphism and $(d\varphi)_e^kX=0$, then $X=0$. Hence $\mathfrak{z}(\mathfrak{h})$ is trivial.
		
		In addition, owing to the compactness of $H$ and the semisimplicity of $[\mathfrak{h},\mathfrak{h}]$ it follows that the Cartan-Killing form restricted to $[\mathfrak{h},\mathfrak{h}]$ is non-degenerated and negative defined (see \cite[Theorem 1.45 and Corollary 4.26]{Knapp}). Since $d\phi_e$ is an automorphism, it is an isometry by Cartan-Killing form, hence all eigenvalues of $d\phi_e|_{[\mathfrak{h},\mathfrak{h}]}$ has absolute value equal to $1$, which yields $\mathfrak{h}=[\mathfrak{h},\mathfrak{h}]\subset\mathfrak{g}^0$.
	\end{proof}
	
	By the proposition \ref{compactsubgroup} the following result is immediate. 
	
	\begin{corollary}
		If $\phi$ is an automorphism of a compact Lie group $G$, then $d\phi_e$ has only eigenvalues with an absolute value equal to $1$.
	\end{corollary}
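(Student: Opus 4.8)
The plan is to deduce the corollary directly from Proposition \ref{compactsubgroup} by applying it to (the identity component of) $G$ itself. First I would pass to the identity component $G_0$ of $G$: it is a characteristic subgroup, hence $\phi$-invariant, and it is compact and connected. Consequently $\phi$ restricts to an automorphism $\phi_0:=\phi|_{G_0}$ of $G_0$, and since $\mathrm{Lie}(G)=\mathrm{Lie}(G_0)=\mathfrak{g}$ and the two maps agree near $e$, we have $d(\phi_0)_e=d\phi_e$; in particular the dynamical decomposition (\ref{decomposition}) of $\mathfrak{g}$ relative to $d\phi_e$ is the same whether we view it through $\phi$ or through $\phi_0$.

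Next I would regard $G_0$ as a compact, connected, $\phi_0$-invariant Lie subgroup of the connected Lie group $G_0$ and invoke Proposition \ref{compactsubgroup}, obtaining $G_0\subseteq (G_0)^0$. Passing to Lie algebras this gives $\mathfrak{g}=\mathrm{Lie}(G_0)\subseteq\mathfrak{g}^0=\bigoplus_{|\alpha|=1}\mathfrak{g}_\alpha$, so the direct sum decomposition (\ref{decomposition}) forces $\mathfrak{g}^+=\mathfrak{g}^-=\{0\}$ and hence $\mathfrak{g}=\mathfrak{g}^0$. By the very definition of $\mathfrak{g}^0$ in (\ref{liesubalgebras}), this means every eigenvalue of $d\phi_e$ has absolute value equal to $1$, which is the assertion.

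There is essentially no real obstacle here — the statement is, as the text says, immediate. The only points requiring a little care are that $G$ is not assumed connected, which is why the argument is run on $G_0$ rather than on $G$ directly, and the routine verification that $G_0$ is $\phi$-invariant and that $d(\phi_0)_e=d\phi_e$, so that Proposition \ref{compactsubgroup} applies verbatim with ambient group $G_0$.
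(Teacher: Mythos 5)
Your proposal is correct and is exactly the deduction the paper intends: the paper gives no proof beyond "immediate from Proposition \ref{compactsubgroup}", and your argument (apply that proposition with $H=G_0$, conclude $\mathfrak{g}=\mathfrak{g}^0$, hence all eigenvalues of $d\phi_e$ have modulus $1$) is the natural filling-in of that one line. The only addition is your careful handling of the case where $G$ is not connected by passing to the identity component, which is a reasonable refinement rather than a different route.
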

	
	The next lemma will be frequently used and can be found in  \cite[Lemma 3.1]{sontag1}.
	
	\begin{lemma}\label{lema212}Let $G$ be a Lie group with Lie algebra $\mathfrak{g}$ and $N$ be a normal Lie subgroup of $G$ with Lie algebra $\mathfrak{n}$. Then for every $X \in \mathfrak{g}$, we have  
		\begin{equation*}
			\exp{(X + \mathfrak{n})}\subset \exp{(X)}N. 
		\end{equation*}
	\end{lemma}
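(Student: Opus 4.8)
The plan is to reduce the statement to the naturality of the exponential map with respect to the quotient homomorphism $\pi\colon G\to G/N$. Since $N$ is a normal Lie subgroup, the quotient $G/N$ is a Lie group and $\pi$ is a smooth homomorphism whose differential $d\pi_e\colon\mathfrak{g}\to\mathfrak{g}/\mathfrak{n}$ is the canonical projection, so that $\ker d\pi_e=\mathfrak{n}$. The only external ingredient I would use is the commutation relation between homomorphisms and exponentials, namely
\[
\pi\circ\exp_G=\exp_{G/N}\circ\, d\pi_e,
\]
which is the same identity already invoked at the end of the proof of Lemma~\ref{invdyngroup}.

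Granting this, the computation is immediate. Fix $X\in\mathfrak{g}$ and take an arbitrary $Z\in\mathfrak{n}$. Then $d\pi_e(Z)=0$, hence
\[
\pi\bigl(\exp_G(X+Z)\bigr)=\exp_{G/N}\bigl(d\pi_e(X+Z)\bigr)=\exp_{G/N}\bigl(d\pi_e(X)\bigr)=\pi\bigl(\exp_G(X)\bigr).
\]
Therefore $\exp_G(X+Z)$ and $\exp_G(X)$ lie in the same left coset of $N$, that is, $\exp_G(X+Z)\in\exp_G(X)\,N$. Since $Z\in\mathfrak{n}$ was arbitrary, this yields $\exp(X+\mathfrak{n})\subset\exp(X)N$, as claimed.

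The step that requires a little care — and which I expect to be the only real obstacle — is the assertion that $G/N$ is a Lie group with $\pi$ a submersion; this is standard when $N$ is closed, and in all the applications in this paper (for instance with $N=\overline{G^{(k)}}$ in Proposition~\ref{decomposable}) the relevant normal subgroup is indeed closed, so no difficulty arises. If one wished to state the lemma for a general immersed normal subgroup, one could instead argue directly with the Baker–Campbell–Hausdorff series, or pass to the closure $\overline{N}$ and note that the conclusion for $\overline{N}$ combined with a density argument still gives what is needed; but for the present purposes the quotient argument above suffices.
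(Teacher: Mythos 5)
Your argument is correct and is the standard one; the paper itself gives no proof of this lemma (it only cites it from the literature), so there is nothing to diverge from, and your quotient-map computation $\pi(\exp(X+Z))=\exp_{G/N}(d\pi_e(X))=\pi(\exp(X))$ establishes exactly the claimed inclusion. You are also right to flag that the argument as written needs $N$ closed for $G/N$ to be a Lie group; for the merely immersed case one would lift to the universal cover (where connected subgroups corresponding to ideals are closed) rather than rely on the closure-plus-density variant, since replacing $N$ by $\overline{N}$ only yields the weaker inclusion $\exp(X+\mathfrak{n})\subset\exp(X)\overline{N}$.
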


	\subsection{Linear control systems on Lie groups}
	This subsection presents some general properties of the discrete-time control systems. We start by recalling that a discrete-time control system on a topological space $M$ is given by the difference equation
	\begin{equation}\label{sistgeneral}
		x_{k+1} = f(x_k, u_k),\ k \in \mathbb{N}_0:=\mathbb{N}\cup\{0\},\ u_k \in U,
	\end{equation}
	where $U$ is a non-empty set and $f:M\times U\rightarrow M$ is a map such that for each $u\in U$, $f_u(\cdot):=f(\cdot, u):M\rightarrow M$ is continuous function. Set $M$ is called the \textit{state space} of (\ref{sistgeneral}) and $U$ is the \textit{control range}. Moreover, the \textit{shift space} is defined as the set of all sequences in $U$, that is, $\mathcal{U}:=\displaystyle\prod_{i=0}^{\infty}U$. The elements in $\mathcal{U}$ are called \textit{controls}. In this case, the following solutions to (\ref{sistgeneral}) are well-defined
	\begin{equation*}
		\varphi(k,x_0,u) = \left\{
		\begin{array}
			[c]{lll}%
			x_0, & \mbox{if} & k=0\\
			f_{u_{k-1}}\circ\cdots\circ f_{u_{1}}\circ f_{u_{0}}(x_0), & \mbox{if} & k\geq1
		\end{array},
		\right.
	\end{equation*}
	for all $k\in\mathbb{N}_0$, $x_0\in M$ and $u=(u_i)_{i\in\mathbb{N}_0}\in\mathcal{U}$. Note that $\varphi(k,\cdot, u):M\rightarrow M$ is continuous for each $k\in\mathbb{N}_0$ and $u=(u_i)_{i\in\mathbb{N}_0}\in\mathcal{U}$.
	
	The \textit{shift} map $\Theta: \N_0\times \mathcal{U} \rightarrow \mathcal{U}$ given by $\Theta(k, (u_j))=\Theta_k((u_j)) := (u_{j+k})$ defines a dynamical system on $\mathcal{U}$. It is well known that $\varphi$ satisfies the cocycle property, that is, 
	$$\varphi(k + l, x, u) = \varphi(k, \varphi(l,x,u), \Theta_l(u)),\ \forall\ k,l \in \N_0, \ \forall\ x\in M, \ \forall\ u\in\mathcal{U}.$$
	
	\begin{definition}For $x \in M$, the reachable and   controllable sets from $\textbf{x}$ at time $k$ are given by
		\begin{eqnarray*}
			\mathcal{R}_k(x) = \{y \in M: \hbox{ there is }u \in \mathcal{U} \hbox{ with }\varphi(k,x,u)=  y\}.
		\end{eqnarray*}
		and
		\begin{eqnarray*}
			\mathcal{C}_k(x) = \{y \in M: \hbox{ there is }u \in \mathcal{U} \hbox{ with }\varphi(k,y,u)= x\},
		\end{eqnarray*}
		respectively. Moreover, the reachable and  controllable sets from $\textit{x}$ \textit{up to time} $\textit{k}$ are given by $\mathcal{R}_{\leq k}(x) = \displaystyle\bigcup_{t\leq k} \mathcal{R}_t(x)$ and $\mathcal{C}(x) = \displaystyle\bigcup_{t\leq k} \mathcal{C}_t(x)$, respectively. The sets $\mathcal{R}(x) = \displaystyle\bigcup_{k \in \mathbb{N}} \mathcal{R}_k(x)$ and $\mathcal{C}(x) = \displaystyle\bigcup_{k \in \mathbb{N}} \mathcal{C}_k(x)$ denote the reachable and   controllable sets from $x$, respectively.  
	\end{definition}
	
	System (\ref{sistgeneral}) is \textit{forward accessible} (resp. \textit{backward accessible}) if $\hbox{int}\mathcal{R}(x) \neq \emptyset$ (resp. $\hbox{int}\mathcal{C}(x) \neq \emptyset$), for all $x \in M$ and it is called \textit{accessible} if both conditions are satisfied. 
	
	According to Wirth \cite{Wir98}, for any $k \in \N$ and $g \in G$, the smooth map $G_k: G \times U^k \rightarrow G$ is defined as follows:
	\begin{equation*}
		G_k(g,u) = \varphi(k,g,u).
	\end{equation*}
	
	A pair $(g,u) \in G \times \hbox{int}U^k$ such that  $\hbox{rank}\left[\frac{\partial}{\partial u}G_k(g,u)\right] = \dim G$ is called \textit{regular}. The \textit{regular reachable set at time} $\textit{k}$ is defined by
	\begin{equation*}
		\hat{\mathcal{R}}_k(g) = \{\varphi(k,g,u): (x,u) \hbox{ is regular}\}
	\end{equation*}
	and the \textit{regular reachable set} as $\hat{\mathcal{R}}(g)  =\cup_{k \in \N} \hat{\mathcal{R}}_k(g)$. We can prove that $\hat{\mathcal{R}}(g)$ is open, for every $g \in G$.
	
	Now, let us define the main object of this work. 
	
	\begin{definition}\label{deflincontsys}
		Consider a discrete-time control system
		\begin{equation}\label{linearcontrolsystem}
			g_{k+1} = f(g_k, u_k), u_k \in U,
		\end{equation}
		on a connected Lie group $G$ with $U\subset\R^m$ a compact neighborhood of $0$. System (\ref{linearcontrolsystem}) is called \textbf{linear} if $f_0$ is an automorphism of $G$ and for each $g\in G$
		\begin{equation}\label{transl}
			f_u(g) = f_u(e) \cdot f_0(g).
		\end{equation}
		where $"\cdot"$ denotes the product on $G$.
	\end{definition}
	
	The group product will be omitted where context makes it evident. Moreover,  equation (\ref{transl}) can be written as:
	\begin{equation*}
		f_u(g) = f_u(e)f_0(g) = L_{f_u(e)}\circ f_0(g),
	\end{equation*}
	where $L_{g}$ is the left translation by $g\in G$. Considering the above expression, we can see that $f_u$ is a diffeomorphism of $G$, for each $u\in U$, with inverse 
	\begin{equation*}
		f_u^{-1}(g) = f_0^{-1}\circ L_{(f_u(e))^{-1}}(g)= f_0^{-1}((f_u(e))^{-1} \cdot g). 
	\end{equation*}
	
	\begin{remark}
		It follows from Definition \ref{deflincontsys} that:
		\begin{itemize}
			\item[1)] $\mathcal{U}$ endowed with the product topology is a compact space.
			\item[2)] for all $k\in\mathbb{N}_0$ and $u\in\mathcal{U}$, the map $\varphi(k,\cdot,u)$ is a diffeomorphism of $G$.
		\end{itemize}
	\end{remark}
	
	\begin{example}
		\label{Example2}The standard example of discrete-time linear system is the difference equation
		\[
		x_{k+1}=Ax_{k}+Bu_{k},\ \ u_{k}\in U,
		\]
		where $G$ is the additive euclidean space $\R^d$, $A\in GL(d,\mathbb{R}),B\in\mathbb{R}^{d\times m}$, and $0\in
		U\subset\mathbb{R}^{m}$. Map $f:\mathbb{R}^{d}\times U\rightarrow
		\mathbb{R}^{d}$ is given by $f(x,u)=Ax+Bu$ and satisfies 
		\begin{itemize}
			\item $f_{0}(x)=Ax$ is an automorphism of $\mathbb{R}^{d}$;
			\item $f_{u}(x)=Ax+Bu=f_{0}(x)+f_{u}(0)=f_{u}(0)+f_{0}(x)$, since $f_u(0) = Bu$
		\end{itemize}
		In this case, the solutions are given by
		\[
		\varphi(k,x,u)=A^{k}x+\sum_{j=0}^{k-1}A^{k-1-j}Bu_{j}.
		\]
	\end{example}
	
	The next proposition shows that the solutions of \ref{linearcontrolsystem}, from an element $g\in G$ can be given by a translation of the solution from $e\in G$ (see \cite[Proposition 3]{CCS1})
	
	\begin{proposition}\label{prop52} 
		Consider the discrete-time linear system (\ref{linearcontrolsystem}) on a Lie group $G$. Then, for all $g \in G$ and $u\in \mathcal{U}$ it holds that
		\begin{equation*}
			\varphi(k,g,u) = \varphi(k,e,u)f_0^k(g). 
		\end{equation*}
	\end{proposition}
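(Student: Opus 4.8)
The natural approach is induction on $k \in \mathbb{N}_0$, using the recursive definition of the solution $\varphi$ together with the two defining features of a linear system: the translation identity (\ref{transl}) and the fact that $f_0$ is an automorphism (in particular a group homomorphism).

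For the base case $k=0$, both sides reduce to $g$: indeed $\varphi(0,g,u)=g$ by definition, while $\varphi(0,e,u)f_0^0(g)=e\cdot g=g$.

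For the inductive step, suppose $\varphi(k,g,u)=\varphi(k,e,u)f_0^k(g)$ holds for a given $k$ and all $g\in G$, $u\in\mathcal{U}$. Then, writing $u=(u_i)_{i\in\mathbb{N}_0}$ and using the recursion $\varphi(k+1,g,u)=f_{u_k}(\varphi(k,g,u))$, I would substitute the inductive hypothesis and apply (\ref{transl}) to the element $h:=\varphi(k,e,u)f_0^k(g)$, obtaining
\begin{equation*}
\varphi(k+1,g,u)=f_{u_k}(e)\,f_0\big(\varphi(k,e,u)f_0^k(g)\big)=f_{u_k}(e)\,f_0(\varphi(k,e,u))\,f_0^{k+1}(g),
\end{equation*}
where the last equality uses that $f_0$ is a homomorphism. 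Finally, applying (\ref{transl}) again in reverse, $f_{u_k}(e)\,f_0(\varphi(k,e,u))=f_{u_k}(\varphi(k,e,u))=\varphi(k+1,e,u)$, which yields $\varphi(k+1,g,u)=\varphi(k+1,e,u)f_0^{k+1}(g)$ and closes the induction.

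There is no real obstacle here; the only points requiring a little care are bookkeeping ones, namely applying the translation identity (\ref{transl}) in the correct direction at the start and at the end of the inductive step, and invoking the homomorphism property of $f_0$ to split $f_0(\varphi(k,e,u)f_0^k(g))$ as a product. Everything else is the routine unwinding of the definition of $\varphi$.
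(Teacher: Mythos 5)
Your induction is correct and complete: the base case, the use of the recursion $\varphi(k+1,g,u)=f_{u_k}(\varphi(k,g,u))$, the translation identity (\ref{transl}) applied in both directions, and the homomorphism property of $f_0$ are exactly the ingredients needed. The paper itself states this proposition without proof, citing \cite{CCS1}, and your argument is precisely the standard one underlying that reference, so there is nothing to correct or compare.
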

	
	In the case of linear systems, the identity $e$ of $G$ satisfies $e \in \mathcal{R}_k(e)$ for all $k \in \N$, because it is a fixed point of $f_0$. In addition, using the notation $\mathcal{R}(e) = \mathcal{R}$, $\mathcal{R}_k(e) = \mathcal{R}_k$ and $\mathcal{R}_{\leq k} = \mathcal{R}_{\leq k}(e)$, we obtain the following proposition. 
	
	\begin{proposition}\label{reachablesetprop}
		For all $k, k_1, k_2\in\mathbb{N}$ $g\in G$ and $u\in\mathcal{U}$, it holds that 
		\begin{itemize}
			\item[1)] $\mathcal{R}_{k} = \mathcal{R}_{\leq k}$;
			\item[2)] If $k_1\leq k_2$, then $\mathcal{R}_{k_1} \subset \mathcal{R}_{k_2}$; 
			\item[3)] $\mathcal{R}_{k}(g) = \mathcal{R}_{k} f_0^{k}(g)$;
			\item[4)] If $k_1, k_2 \in \N$, then $\mathcal{R}_{k_1 + k_2} = \mathcal{R}_{k_1} f_0^{k_1}(\mathcal{R}_{k_2}) = \mathcal{R}_{k_2} f_0^{k_2}(\mathcal{R}_{k_1})$;
			\item[5)] For any $u \in \mathcal{U}$, $g \in G$ and $k \in \N$, then 
			\begin{equation*}
				\varphi(k,\mathcal{R}(g),u) \subset \mathcal{R}(g);
			\end{equation*}
			\item[6)] $e \in \hbox{int}\mathcal{R}$ if and only if $\mathcal{R}$ is open. 
		\end{itemize}
	\end{proposition}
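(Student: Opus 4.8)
The plan is to establish the six items in the order 3, 4, 2, 1, 5, 6, since the later ones rest on the earlier ones; everything ultimately reduces to Proposition~\ref{prop52}, the cocycle property, the fact that $e$ is a fixed point of $f_0$ (so $e\in\mathcal R_k$ for every $k$, using the zero control), and the fact that each $\varphi(k,\cdot,u)$ is a diffeomorphism of $G$.

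Item 3 is immediate from Proposition~\ref{prop52}: writing $\mathcal R_k(g)=\{\varphi(k,g,u):u\in\mathcal U\}$ and substituting $\varphi(k,g,u)=\varphi(k,e,u)f_0^k(g)$ gives $\mathcal R_k(g)=\mathcal R_k f_0^k(g)$. For item 4 I would start from the cocycle identity $\varphi(k_1+k_2,e,u)=\varphi(k_2,\varphi(k_1,e,u),\Theta_{k_1}(u))$ and apply Proposition~\ref{prop52} to the outer solution, obtaining $\varphi(k_1+k_2,e,u)=\varphi(k_2,e,\Theta_{k_1}(u))\,f_0^{k_2}(\varphi(k_1,e,u))$. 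The key observation is that $\varphi(k_1,e,u)$ depends only on the coordinates $u_0,\dots,u_{k_1-1}$ of $u$, while $\varphi(k_2,e,\Theta_{k_1}(u))$ depends only on $u_{k_1},\dots,u_{k_1+k_2-1}$; hence as $u$ runs over $\mathcal U$ these two factors run independently over $\mathcal R_{k_1}$ and $\mathcal R_{k_2}$, which yields $\mathcal R_{k_1+k_2}=\mathcal R_{k_2}f_0^{k_2}(\mathcal R_{k_1})$, and the remaining equality follows by exchanging the roles of $k_1$ and $k_2$.

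Item 2 then follows from item 4: for $k_1<k_2$ write $\mathcal R_{k_2}=\mathcal R_{k_1}f_0^{k_1}(\mathcal R_{k_2-k_1})$; since $e\in\mathcal R_{k_2-k_1}$ and $f_0^{k_1}(e)=e$, we get $\mathcal R_{k_1}=\mathcal R_{k_1}\cdot e\subset\mathcal R_{k_2}$ (the case $k_1=k_2$ being trivial). Item 1 is then immediate, because $\mathcal R_{\le k}=\bigcup_{t\le k}\mathcal R_t\subset\mathcal R_k$ by item 2 and the reverse inclusion is obvious. For item 5, given $y\in\mathcal R(g)$, say $y=\varphi(l,g,v)$, I would concatenate controls: let $w\in\mathcal U$ consist of $v_0,\dots,v_{l-1}$ followed by $u_0,u_1,\dots$; then $\varphi(l,g,w)=y$ and $\Theta_l(w)=u$, so the cocycle property gives $\varphi(k,y,u)=\varphi(l+k,g,w)\in\mathcal R_{l+k}(g)\subset\mathcal R(g)$.

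Finally, item 6. The ``if'' direction is clear, since $e\in\mathcal R_1\subset\mathcal R$, so $\mathcal R$ open forces $e\in\operatorname{int}\mathcal R$. For the converse, assume $e\in\operatorname{int}\mathcal R$ and take $y\in\mathcal R$, say $y=\varphi(k,e,u)$ for some $k$ and $u\in\mathcal U$. The map $\psi:=\varphi(k,\cdot,u):G\to G$ is a diffeomorphism of $G$ (second item of the Remark following Definition~\ref{deflincontsys}) with $\psi(e)=y$, and by item 5 with $g=e$ it satisfies $\psi(\mathcal R)\subset\mathcal R$. Hence $\psi(\operatorname{int}\mathcal R)$ is an open set that contains $y$ and is contained in $\mathcal R$, so $y\in\operatorname{int}\mathcal R$; as $y\in\mathcal R$ was arbitrary, $\mathcal R$ is open. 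I expect this last argument --- observing that the ``reaching'' diffeomorphisms leave $\mathcal R$ invariant and therefore carry the interior point $e$ onto any chosen point of $\mathcal R$ --- to be the only non-routine step, the remaining items being bookkeeping with the cocycle property and Proposition~\ref{prop52}.
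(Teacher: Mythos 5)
Your proof is correct and follows essentially the same route as the paper's: Proposition~\ref{prop52}, the cocycle property, concatenation of controls, the fact that $e$ is a fixed point reachable with the zero control, and the diffeomorphism property of $\varphi(k,\cdot,u)$ for item 6. The only difference is organizational --- you derive items 1 and 2 from item 4, whereas the paper proves item 1 directly by padding controls with zeros at the front and obtains item 2 as a corollary.
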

	
	\begin{proof}
		\begin{itemize}
			\item[1)] It is clear that $\mathcal{R}_{k} \subset \mathcal{R}_{\leq k}.$ On the other hand, taking $t \in [1,k) \cap \N$, an arbitrary $u \in \mathcal{U}$ and considering the control
			\begin{equation*}
				v=\left\{
				\begin{array}{lcl}
					0,& \text{for}& j < k - t\\
					u_{j - k + t},& \text{for}& j \geq k -t
				\end{array},
				\right.
			\end{equation*}
			we have
			\begin{equation*}
				\varphi(t,e,u) = \varphi(t,\varphi(k - t,e,0),u) = \varphi(t,\varphi(k-t,e,v),\Theta_{k-t}(v))=\varphi(k,e,v), 
			\end{equation*}
			which implies that $\mathcal{R}_k \subset \mathcal{R}_{t}$, proving the statement.
			
			\item[2)] Consequence of (1). 
			
			\item[3)] It follows from Proposition \ref{prop52}.
			\item[4)] Note that
			\begin{eqnarray*}
				\varphi(k_1 + k_2,e,u) &=& \varphi(k_1, \varphi(k_2,e,u),\Theta_{k_2}(u))\\
				&=&\varphi(k_1, e, \Theta_{k_2}(u)) f_0^{k_1}(\varphi(k_2,e,u)) \in \mathcal{R}_{k_1}f_0^{k_1}(\mathcal{R}_{k_2})
			\end{eqnarray*}
			which means that $\mathcal{R}_{k_1 + k_2} \subset \mathcal{R}_{k_1}f_0^{k_1}(\mathcal{R}_{k_2})$. Now, given $u,v \in \mathcal{U}$, we have
			\begin{eqnarray*}
				\varphi(k_1, e, u)f_0^{k_1}(\varphi(k_2,e,v))&=& \varphi(k_1, \varphi(k_2,e,v),u)\\
				&=&\varphi(k_1 + k_2, e,w), 
			\end{eqnarray*}
			with 
			\begin{equation*}
				w = 
				\left\{
				\begin{array}{lcc}
					v_j,& j < k_2\\
					u_{j - k_2},& j \geq k_2
				\end{array}.
				\right.
			\end{equation*}
			
			The inclusion $\mathcal{R}_{k_2}f_0^{k_2}(\mathcal{R}_{k_1})\subset \mathcal{R}_{k_1 + k_2}$ follows by the same arguments above. 
			
			\item[5)] Take $k \in \N$, $g \in G$ and $u \in \mathcal{U}$. If $h \in \mathcal{R}(g)$, then $h = \varphi(t,g,v)$ for some $t \in \N$ and $v \in \mathcal{U}$. Thus 
			\begin{equation*}
				\varphi(k,h,u) = \varphi(k,\varphi(t,g,v),u) = \varphi(k + t, g,w) \in \mathcal{R}(g),
			\end{equation*}
			with 
			\begin{equation*}
				w = \left\{
				\begin{array}{lcc}
					v_j,& j < t\\
					u_{j - t},& j \geq t
				\end{array}.
				\right.
			\end{equation*}
			
			\item[6)] As $e \in \mathcal{R}$ if $\mathcal{R}$ is open then $e \in \hbox{int}\mathcal{R}$. Suppose that $e \in \hbox{int}\mathcal{R}$ and take $g \in \mathcal{R}$. Then there are $t \in \N$ and $u \in \mathcal{U}$ such that 
			\begin{equation*}
				\varphi(t,e,u)  = g. 
			\end{equation*}
			
			Take $V_e \subset \mathcal{R}$ as the neighborhood of $e$. As $f_u$ is diffeomorphism, the set $V_g = \varphi(t,V_e, u)$ is a neighborhood of $g$ and $\varphi(t,V_e,u) \subset \varphi(t,\mathcal{R},u) \subset \mathcal{R}$.
		\end{itemize}
	\end{proof}
	
	Because the map $f_u$ of the linear system (\ref{linearcontrolsystem}) is a diffeomorphism of $G$ for each $u\in U$ we can define its \textit{reversed counterpart} as
	\begin{eqnarray}\label{inverselinearcontrolsystem}
		h_{k+1}=\tilde{f}_{u_k}(h_k), \ u\in\mathcal{U},
	\end{eqnarray}
	where $\tilde{f}_u(h)=f_u^{-1}(e)f_0^{-1}(h)$ for all $h\in G$. Note that (\ref{inverselinearcontrolsystem}) is also a linear system on $G$ with $\tilde{f}_0(x)=f_0^{-1}(x)$, hence $d\tilde{f}_0=df_0^{-1}$. In this case, if $\alpha$ is an eigenvalue of $df_0$, $\alpha^{-1}$ is an eigenvalue of $d\tilde{f}_0$.
	
	Therefore, we can consider the generalized eigenspaces
	\begin{equation*}
		\mathfrak{g}_*^+ = \sum_{|\alpha|>1} \mathfrak{g}^*_{\alpha},\ \mathfrak{g}_*^- = \sum_{|\alpha|<1} \mathfrak{g}^*_{\alpha},\ \mathfrak{g}_*^0 = \sum_{|\alpha|=1} \mathfrak{g}^*_{\alpha}.
	\end{equation*}
	where $\mathfrak{g}_{\alpha}^*$ is the generalized eigenspace associated with the eigenvalue $\alpha$ of $d\tilde{f}_0$. It can be seen that $\mathfrak{g}^-_* = \mathfrak{g}^+, \mathfrak{g}^+_* = \mathfrak{g}^-$ and $\mathfrak{g}^0_* = \mathfrak{g}^0$. Analogously, $G_*^+=G^-$, $G_*^-=G^+$ and $G_*^0=G^0$.
	
	\begin{example}
		Consider the system in example \ref{Example2}. In this case, the reverse counterpart of the system is given by 
		\begin{equation*}
			x_{k+1}=A^{-1}x_{k}-A^{-1}Bu_{k},\ \ u_{k}\in U\subset\mathbb{R}^{m}.
		\end{equation*}
	\end{example}
	
	Let us denote by $\mathcal{R}_k^*$ and $\mathcal{C}_k^*$ as the reachable and   controllable sets from $e$ up to time $k$ in (\ref{inverselinearcontrolsystem}), respectively. Then the following lemma holds.
	
	\begin{lemma}\label{setsinvsys}
		It holds that $\mathcal{R}_k^*=\mathcal{C}_k$ and $\mathcal{R}_k=\mathcal{C}_k^*$, for all $k\in\mathbb{N}$.
	\end{lemma}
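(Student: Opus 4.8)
The plan is to unwind the two solution maps explicitly and then exhibit a bijection between admissible control sequences, realised simply by reversing the order of the first $k$ entries.

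First I would record that the map driving the reversed system coincides with the inverse of $f_u$: since $f_u^{-1}(h)=f_0^{-1}\big((f_u(e))^{-1}h\big)=f_0^{-1}\big((f_u(e))^{-1}\big)f_0^{-1}(h)=f_u^{-1}(e)f_0^{-1}(h)=\tilde f_u(h)$, we have $\tilde f_u=f_u^{-1}$. Hence the solution map of (\ref{inverselinearcontrolsystem}) is $\tilde\varphi(k,h,u)=f_{u_{k-1}}^{-1}\circ\cdots\circ f_{u_1}^{-1}\circ f_{u_0}^{-1}(h)$, while that of (\ref{linearcontrolsystem}) is $\varphi(k,g,u)=f_{u_{k-1}}\circ\cdots\circ f_{u_1}\circ f_{u_0}(g)$. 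I would also note, for later use, that the reversed counterpart of (\ref{inverselinearcontrolsystem}) is again (\ref{linearcontrolsystem}), because $\tilde{\tilde f}_u(h)=\tilde f_u^{-1}(e)\,\tilde f_0^{-1}(h)=f_u(e)f_0(h)=f_u(h)$.

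Next I would prove $\mathcal C_k=\mathcal R_k^{*}$. If $y\in\mathcal C_k$, choose $u\in\mathcal U$ with $\varphi(k,y,u)=e$; inverting the composition gives $y=(f_{u_{k-1}}\circ\cdots\circ f_{u_0})^{-1}(e)=f_{u_0}^{-1}\circ f_{u_1}^{-1}\circ\cdots\circ f_{u_{k-1}}^{-1}(e)$. Defining $\hat u\in\mathcal U$ by $\hat u_j=u_{k-1-j}$ for $0\le j\le k-1$ and $\hat u_j=0$ otherwise, the right-hand side is exactly $f_{\hat u_{k-1}}^{-1}\circ\cdots\circ f_{\hat u_0}^{-1}(e)=\tilde\varphi(k,e,\hat u)$, so $y\in\mathcal R_k^{*}$. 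The opposite inclusion is the same reversal applied to a control realising a point of $\mathcal R_k^{*}$, which gives $\mathcal C_k=\mathcal R_k^{*}$.

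Finally, applying the identity just proved to the linear system (\ref{inverselinearcontrolsystem}) in place of (\ref{linearcontrolsystem}) — legitimate since its reversed counterpart is (\ref{linearcontrolsystem}) — yields $\mathcal C_k^{*}=\mathcal R_k$, completing the proof; for the ``up to time $k$'' versions the same argument applies verbatim, or one invokes Proposition \ref{reachablesetprop}(1) together with its obvious controllable-set analogue, valid for both linear systems. I do not expect a genuine obstacle here: the only delicate point is bookkeeping the order of composition, and hence the direction in which the control sequence must be reversed.
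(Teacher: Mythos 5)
Your proof is correct and follows essentially the same route as the paper: both arguments rest on the identity $\tilde f_u=f_u^{-1}$ and on reversing the order of the first $k$ control values to convert a trajectory of one system ending at $e$ into a trajectory of the other starting at $e$. The only cosmetic difference is that you deduce the second identity by noting that the reversed counterpart of the reversed system is the original, whereas the paper simply says the other equality "follows analogously."
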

	
	\begin{proof}
		Note that for any $k\in\mathbb{N}$, $g\in G$ and $u\in\mathcal{U}$, we have
		\begin{eqnarray*}
			\tilde{f}_{u_{k-1}}\circ\cdots\circ\tilde{f}_{u_{0}}(g)=e \Leftrightarrow \tilde{f}_{u_{0}}^{-1}\circ\cdots\circ\tilde{f}_{u_{k-1}}^{-1}(e)=g
			\Leftrightarrow f_{u_{0}}\circ\cdots\circ f_{u_{k-1}}(e)=g,
		\end{eqnarray*}
		because $\tilde{f}_u^{-1}=f_u$, for all $u\in U$. If $\varphi^*$ denotes the solution of (\ref{inverselinearcontrolsystem}), then
		$$\varphi^*(k,g,u)=e\Leftrightarrow\varphi(k,e,\tilde{u})=g,$$
		where
		$$
		\tilde{u}=\left\{
		\begin{array}{lcc}
			u_{k-1-j},& j <k\\
			0,& j \geq k
		\end{array},
		\right.
		$$
		This shows that $\mathcal{R}_k=\mathcal{C}_k^*$. The other equality follows analogously.
	\end{proof}
	
	In sequence we present a lemma that will be widely used throughout the work.
	
	\begin{lemma}\label{AginA} Let $g \in \mathcal{R}$ such that $f_0^{-k}(g) \in \mathcal{R}$, for all $k\in\Z$. Then $\mathcal{R}\cdot g \subset \mathcal{R}$. 
	\end{lemma}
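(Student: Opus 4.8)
The plan is to exploit the near-semigroup structure of the family $(\mathcal{R}_k)_{k\in\mathbb{N}}$ recorded in Proposition \ref{reachablesetprop}, item 4, together with the fact that the hypothesis controls $g$ under \emph{all} integer iterates of $f_0$, in particular the negative ones. First I would fix an arbitrary element $h\in\mathcal{R}$ and choose $k_1\in\mathbb{N}$ with $h\in\mathcal{R}_{k_1}$; this is possible because $\mathcal{R}=\bigcup_{k\in\mathbb{N}}\mathcal{R}_k$.

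The crucial observation is the following. By hypothesis $f_0^{-k_1}(g)\in\mathcal{R}$, so there exists $k_2\in\mathbb{N}$ with $f_0^{-k_1}(g)\in\mathcal{R}_{k_2}$. Writing $hg=h\cdot f_0^{k_1}(f_0^{-k_1}(g))$, the first factor lies in $\mathcal{R}_{k_1}$ and the second in $f_0^{k_1}(\mathcal{R}_{k_2})$, whence by Proposition \ref{reachablesetprop}, item 4,
$$hg\in\mathcal{R}_{k_1}f_0^{k_1}(\mathcal{R}_{k_2})=\mathcal{R}_{k_1+k_2}\subset\mathcal{R}.$$
Since $h\in\mathcal{R}$ was arbitrary, this yields $\mathcal{R}\cdot g\subset\mathcal{R}$.

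There is essentially no obstacle once the right move is spotted; the only point demanding care is the left/right bookkeeping in the factorization $\mathcal{R}_{k_1+k_2}=\mathcal{R}_{k_1}f_0^{k_1}(\mathcal{R}_{k_2})$, which forces one to insert $f_0^{-k_1}(g)$ rather than $g$ directly — and this is precisely where the $\mathbb{Z}$-indexed hypothesis $f_0^{k}(g)\in\mathcal{R}$ is invoked. In fact only the negative powers $k=-k_1$ with $k_1\in\mathbb{N}$ are used, while the case $k=0$ merely recovers the assumption $g\in\mathcal{R}$ itself.
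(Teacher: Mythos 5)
Your proof is correct and follows essentially the same route as the paper: both arguments hinge on writing $hg = h\cdot f_0^{k_1}\bigl(f_0^{-k_1}(g)\bigr)$ and invoking the negative iterates of the hypothesis. The only cosmetic difference is that you cite Proposition \ref{reachablesetprop}(4) directly, whereas the paper re-derives that concatenation identity inline via Proposition \ref{prop52} and the cocycle property.
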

	
	\begin{proof}
		In fact, given $k \in \N$ and $u \in \mathcal{U}$ one has
		\begin{eqnarray*}
			\varphi(k,e,u)g = \varphi(k,e,u)f_0^k(f_0^{-k}(g))=\varphi(k,f_0^{-k}(g),u),
		\end{eqnarray*}
		for all $g\in G$. Hence, if we assume that $g$ is an element in $\mathcal{R}$ satisfying the required assumptions, then there are $l\in\N_0$ and $v \in \mathcal{U}$ such that $f_0^{-k}(g) = \varphi(l,e,v)$. Then 
		\begin{equation*}
			\varphi(k,e,u)g=\varphi(k,f_0^{-k}(g),u)=\varphi(k,\varphi(l,e,v),u) = \varphi(k+l,e,w),
		\end{equation*}
		with 
		\begin{equation*}
			w = \left\{
			\begin{array}{lcc}
				v_j,& j < l\\
				u_{j - l},& j \geq l
			\end{array},
			\right.
		\end{equation*}
		that is $\mathcal{R}\cdot g \subset \mathcal{R}$.
	\end{proof}
	
	To simplify, we denote $(df_0)_e$ by $df_0$. Since $f_0$ and $df_0$ are automorphisms on $G$ and $\mathfrak{g}$, respectively, we have
	$$f_0^n(\exp X)=\exp (df_0^nX),$$
	for all $n\in\Z$ and all $X\in\mathfrak{g}$. This fact is useful in the next results, whose proof can be easily adapted for the discrete-time case from   Corollaries 3.2 and 3.3 in \cite{Adriano}. 
	
	\begin{corollary}Suppose that $H$ is a connected $f_0$-invariant Lie subgroup of $G$ with Lie algebra $\mathfrak{h}$. If $\exp{X} \in \mathcal{R}$, for any $X \in \mathfrak{h}$ then $H \subset \mathcal{R}$. 
	\end{corollary}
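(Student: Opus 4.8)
The plan is to reduce the statement about the connected subgroup $H$ to the hypothesis $\exp X \in \mathcal{R}$ by using the fact that $H$ is generated (as a group) by the image of its exponential map, together with Lemma \ref{AginA}. First I would observe that since $H$ is $f_0$-invariant with Lie algebra $\mathfrak{h}$, the subalgebra $\mathfrak{h}$ is $df_0$-invariant, hence $df_0^k(\mathfrak{h}) = \mathfrak{h}$ for all $k \in \Z$; consequently, for any $X \in \mathfrak{h}$ and any $k \in \Z$ we have $f_0^k(\exp X) = \exp(df_0^k X)$ with $df_0^k X \in \mathfrak{h}$, so by hypothesis $f_0^k(\exp X) \in \mathcal{R}$ for all $k \in \Z$. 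This is exactly the hypothesis needed to invoke Lemma \ref{AginA} with $g = \exp X$, which yields $\mathcal{R} \cdot \exp X \subset \mathcal{R}$ for every $X \in \mathfrak{h}$.

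Next I would iterate this. Since $e \in \mathcal{R}$, applying $\mathcal{R} \cdot \exp X \subset \mathcal{R}$ starting from $e$ gives $\exp X_1 \in \mathcal{R}$, then $\exp X_1 \exp X_2 \in \mathcal{R}$, and inductively $\exp X_1 \exp X_2 \cdots \exp X_n \in \mathcal{R}$ for any finite collection $X_1, \ldots, X_n \in \mathfrak{h}$. In other words, the subgroup of $G$ generated by $\exp(\mathfrak{h})$ is contained in $\mathcal{R}$. It is a standard fact about connected Lie groups that a connected Lie subgroup $H$ equals the subgroup algebraically generated by $\exp(\mathfrak{h})$ (every element of $H$ is a finite product of exponentials of elements of $\mathfrak{h}$, since $H$ carries its own connected Lie group structure for which $\exp_H = \exp_G|_{\mathfrak{h}}$ and a connected topological group is generated by any neighborhood of the identity). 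Therefore $H \subset \mathcal{R}$.

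The only mild subtlety to be careful about is the interplay between $\exp_H$ and $\exp_G$: one must note that the inclusion $H \hookrightarrow G$ is a Lie group homomorphism, so $\exp_G(X) = \exp_H(X)$ for $X \in \mathfrak{h}$, and hence products of $\exp_G$ of elements of $\mathfrak{h}$ indeed fill out all of $H$. No deep obstacle is expected here; the argument is essentially a packaging of Lemma \ref{AginA} plus the generation of a connected Lie group by exponentials. If one wanted to avoid appealing to the abstract generation fact, an alternative is to note that $\exp(\mathfrak{h})$ contains a neighborhood of $e$ in $H$, that $\mathcal{R} \cap H$ is then a subgroup of $H$ containing this neighborhood (using $\mathcal{R} \cdot \exp X \subset \mathcal{R}$ and closure under the products just described), and that a subgroup of a connected topological group containing an identity neighborhood is the whole group.
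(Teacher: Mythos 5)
Your proof is correct and follows essentially the same route as the paper's: deduce $f_0^k(\exp X)=\exp(df_0^k X)\in\mathcal{R}$ from the $df_0$-invariance of $\mathfrak{h}$, invoke Lemma \ref{AginA} to get $\mathcal{R}\cdot\exp X\subset\mathcal{R}$, and conclude by the fact that the connected subgroup $H$ is generated by $\exp(\mathfrak{h})$. The paper compresses the last step into ``follows by connectedness of $H$''; you have simply made that iteration explicit.
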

	
	\begin{proof}Corollary 3.2 from \cite{Adriano}.  
	\end{proof}
	
	\begin{corollary}\label{corBinv}Suppose that $H$ is a connected Lie subgroup of $G$ and there is a neighborhood $B$ of $e$ in $H \cap \mathcal{R}$ which is invariant by $f_0$ and $f_0^{-1}$, then $H$ is $f_0$-invariant and $H \subset \mathcal{R}$. 
	\end{corollary}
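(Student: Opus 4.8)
The plan is to deduce this corollary from the corollary immediately preceding it, by verifying its two hypotheses for $H$: that $H$ is $f_0$-invariant, and that $\exp X\in\mathcal R$ for every $X$ in the Lie algebra $\mathfrak h$ of $H$. The first thing to record is that the invariance assumption on $B$ is genuinely two-sided: $f_0(B)\subset B$ together with $f_0^{-1}(B)\subset B$ (equivalently $B\subset f_0(B)$) forces $f_0(B)=B$, hence $f_0^{k}(B)=B$ for every $k\in\Z$. Since $B\subset\mathcal R$ is a neighbourhood of $e$ in $H$, for each $Y\in\mathfrak h$ the one-parameter curve $t\mapsto\exp(tY)$ takes values in $B$ for $|t|$ small; this will be the only use of the local structure of $B$.

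To see that $H$ is $f_0$-invariant, fix $Y\in\mathfrak h$. For small $t$ we have $\exp(tY)\in B$, hence $f_0(\exp(tY))=\exp\bigl(t\,df_0(Y)\bigr)\in f_0(B)=B\subset H$. Thus the smooth curve $t\mapsto\exp\bigl(t\,df_0(Y)\bigr)$ takes values in $H$ near $t=0$, so its velocity $df_0(Y)$ at $t=0$ lies in $T_eH=\mathfrak h$. Therefore $df_0(\mathfrak h)\subset\mathfrak h$; since $df_0$ is an automorphism of $\mathfrak g$ this inclusion is an equality, so $\mathfrak h$ is $df_0$-invariant and, $H$ being connected, $H$ is $f_0$-invariant.

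For the second hypothesis, fix $X\in\mathfrak h$ and pick $n\in\N$ large enough that $g:=\exp(X/n)\in B$. Then $g\in\mathcal R$, and for every $k\in\Z$ we have $f_0^{k}(g)\in f_0^{k}(B)=B\subset\mathcal R$, so Lemma \ref{AginA} applies and gives $\mathcal R\cdot g\subset\mathcal R$. Feeding $g\in\mathcal R$ back into this inclusion repeatedly yields $g^{2}=g\cdot g\in\mathcal R\cdot g\subset\mathcal R$, then $g^{3}\in\mathcal R$, and inductively $g^{m}\in\mathcal R$ for all $m\in\N$; in particular $\exp X=g^{n}\in\mathcal R$. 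As $X\in\mathfrak h$ was arbitrary, the preceding corollary applies to the $f_0$-invariant connected subgroup $H$ and yields $H\subset\mathcal R$, which completes the argument.

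The step that needs the most care is the application of Lemma \ref{AginA}: its hypothesis requires $f_0^{k}(g)\in\mathcal R$ for all integers $k$, positive and negative alike, and this is exactly where one uses that $B$ is invariant under both $f_0$ and $f_0^{-1}$, not merely forward-invariant. The surrounding steps — that a small piece of each one-parameter subgroup of $H$ lands in $B$, and that powers of an element of $\mathcal R$ whose full $f_0$-orbit stays in $\mathcal R$ remain in $\mathcal R$ — are routine once that observation is in place. (One can also bypass the preceding corollary entirely: the same use of Lemma \ref{AginA} shows by induction that $B^{m}\subset\mathcal R$ for every $m$, and since $B$ is a neighbourhood of $e$ in the connected group $H$ one has $H=\bigcup_{m\ge 1}B^{m}$, which gives both $H\subset\mathcal R$ and, via $f_0(B^{m})=B^{m}$, the $f_0$-invariance of $H$.)
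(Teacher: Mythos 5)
Your argument is correct, and the parenthetical route at the end --- $B^m\subset\mathcal{R}$ for all $m$ by induction via Lemma \ref{AginA}, then $H=\bigcup_{m}B^m\subset\mathcal{R}$ by connectedness --- is exactly the paper's own proof, while your main route through the preceding corollary is only a minor variant resting on the same key lemma. You additionally verify that $H$ is $f_0$-invariant (both via the Lie algebra and via $f_0(B^m)=B^m$), a claim the statement makes but the paper's two-line proof leaves implicit.
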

	
	\begin{proof}Corolalary 3.3 from \cite{Adriano}. 
	\end{proof}
	
	The next section contains the main results of this paper, including the condition for the controllability of discrete-time linear systems on the affine two-dimensional Lie group. 
	
	\section{Controllability on solvable Lie groups}\label{section3}
	
	From now on, we will consider the discrete-time linear system (\ref{linearcontrolsystem}) on a connected solvable n-dimensional  Lie group $G$. Also, we will consider $f_0$ satisfying $df_0(X) = e^{\mathcal{D}_1} \circ  ... \circ e^{\mathcal{D}_k}(X)$, for $\mathcal{D}_j \in \hbox{Der}(\mathfrak{g})$. This section presents some controllability results and an application to linear systems on a non-abelian $2$-dimensional solvable Lie group.
	
	\subsection{General results}
	
	The aim of this subsection is to present a sufficient condition for the controllability of discrete-time linear systems on solvable Lie groups.
	
	At first, regarding to the automorphism $f_0$, we have the following result. 
	
	\begin{proposition}\label{propnilpotent}Let $\mathfrak{h} \subset \mathfrak{g}$ be a Lie subalgebra of $\mathfrak{g}$, $\mathfrak{n}$ a ideal of $\mathfrak{h}$ such that $\mathcal{D}_j(\mathfrak{h}) \subset \mathfrak{n}$. If $N = \langle \exp(\mathfrak{n}) \rangle \subset \mathcal{R}$, then $H = \langle \exp(\mathfrak{h}) \rangle \subset \mathcal{R}$.  
	\end{proposition}
	
	\begin{proof} Given $X \in \mathfrak{h}$, we have 
		\begin{equation*}
			e^{k\mathcal{D}_j}X = X + \sum_{n \geq 1} \frac{k^n \mathcal{D}_j^n}{n!} X = X + Y,
		\end{equation*}
		where $Y \in \mathfrak{n}$. As $df_0 = e^{\mathcal{D}_k} \circ  ...  \circ e^{\mathcal{ D}_k}$, using the expression above, we conclude that 
		\begin{equation*}
			df_0^l X = X + Z,  
		\end{equation*}
		where $Z \in \mathfrak{n}$. Now, as $\mathfrak{n}$ is an ideal, then $N$ is normal in $H$. By Lemma \ref{lema212}, $\exp(X + Z) = \exp(X)g'$ for some $g' \in N$. Therefore, the rest of proof follows using the same arguments as in Proposition 3.4 from \cite{Adriano}. 
	\end{proof}
	
	The \textit{Baker-Campbell-Hausdorff} is a series  $C(X,Y)$ defined on $\mathfrak{g}$ constructed as 
	\begin{equation}\label{BCH}
		C(X,Y) = X + Y + \sum_{n \geq 2} C_n(X,Y),
	\end{equation}
	where 
	\begin{equation*}
		\begin{array}{ccl}
			C_2(X,Y) &=& - \dfrac{1}{2}[X,Y], \\
			C_3(X,Y) &=& \dfrac{1}{12}\left([X,[X,Y]] - [Y,[Y,X]]\right),\\
			C_4(X,Y) &=& -\dfrac{1}{24}([Y,[X,[X,Y]]]),
		\end{array}
	\end{equation*}
	and so on. The other elements of $C_n$ are  difficult to express. In \cite[Proposition 8.5, Chapter 8]{sanmartin1}   proved that   expression \ref{BCH} is always valid under the condition $|X|, |Y| \leq \rho$, for a sufficiently small $\rho > 0$. 
	
	Now, consider $N \subset G^0$ as a nilpotent Lie subgroup of $G$. If $\mathfrak{n}$ is the Lie subalgebra of $N$, then there exists $l\in\mathbb{N}$ such that its lower central series satisfies
	\begin{equation*}
		\mathfrak{n}_1 \supset \mathfrak{n}_2 \supset \cdots \supset \mathfrak{n}_{l+1} = \{0\}.   
	\end{equation*}
	where $\mathfrak{n}_1 = \mathfrak{n}$ and $\mathfrak{n}_{i+1} = [\mathfrak{n}, \mathfrak{n}_i]$. Each set $\mathfrak{n}_i$ can be considered as 
	\begin{equation}\label{n_i}
		\mathfrak{n}_i = \bigoplus_{|\alpha| = 1} \mathfrak{n}_{i,\alpha},
	\end{equation}
	where $\mathfrak{n}_{i,\alpha} = \mathfrak{g}_{\alpha} \cap \mathfrak{n}_i$. We also can consider 
	\begin{equation*}
		\mathfrak{n}_{i, \alpha} = \bigcup_{j \in \N_0} \mathfrak{n}_{i, \alpha}^{j}, 
		\mathfrak{n}_{i, \alpha}^{j}= \{X \in \mathfrak{n}_{i, \alpha}: (df_0 - \alpha)^j X = 0\}.
	\end{equation*}
	
	In this section, it is assumed that $\mathcal{R}$ is open. The following lemma is the most important part in this section. 
	
	\begin{lemma}\label{G+0inreach} Let $N \subset G^0$ be a nilpotent connected $f_0$-invariant Lie subgroup of $G^0$. Then $N \subset \mathcal{R}.$  
	\end{lemma}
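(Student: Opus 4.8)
The plan is to prove, by downward induction on $i$, that the connected subgroup $N_i$ of $G$ with Lie algebra $\mathfrak{n}_i$ is contained in $\mathcal{R}$ (so the lemma is the case $i=1$, where $N_1=N$). The base case $i=l+1$ is trivial since $\mathfrak{n}_{l+1}=\{0\}$. For the step, suppose $N_{i+1}\subset\mathcal{R}$; as $N_{i+1}$ is connected and $f_0$-invariant, $f_0^{k}(N_{i+1})=N_{i+1}\subset\mathcal{R}$ for all $k\in\Z$, so Lemma~\ref{AginA} applied to each point of $N_{i+1}$ gives $\mathcal{R}\cdot N_{i+1}\subset\mathcal{R}$. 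It then suffices to show $\mathcal{R}\cdot\exp(tX)\subset\mathcal{R}$ for every $X\in\mathfrak{n}_i$ and every $t\in\R$: since $N_i$ is connected it is generated by $\exp(\mathfrak{n}_i)$, so absorbing generators one at a time yields $\mathcal{R}\cdot N_i\subset\mathcal{R}$, hence $N_i=e\cdot N_i\subset\mathcal{R}$.

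Fix $\rho>0$ with $\exp(B_\rho)\subset\mathcal{R}$ (possible since $\mathcal{R}$ is open and $e\in\mathcal{R}$), where $B_\rho$ is the $\rho$-ball around $0$ in $\mathfrak{n}_i$ for a norm in which the semisimple part of $df_0|_{\mathfrak{n}_i}$ (diagonalizable with all eigenvalues of modulus $1$, because $\mathfrak{n}_i\subset\mathfrak{g}^0$) acts by isometries. Using the decomposition $\mathfrak{n}_i=\bigoplus_{|\alpha|=1}\mathfrak{n}_{i,\alpha}$ and the Jordan filtration $\mathfrak{n}_{i,\alpha}^{1}\subset\mathfrak{n}_{i,\alpha}^{2}\subset\cdots$ of each eigenspace (cf.~(\ref{n_i}) and the subspaces defined thereafter), I would prove by a secondary induction on the level $j$ (the case $j=0$ being vacuous) that $\mathcal{R}\cdot\exp(tX)\subset\mathcal{R}$ for every $\alpha$, every $X\in\mathfrak{n}_{i,\alpha}^{j}$ and every $t\in\R$. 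Writing $df_0|_{\mathfrak{n}_{i,\alpha}}=S+D$ with $S$ semisimple, $D$ nilpotent and $SD=DS$, one has, for every $k\in\Z$,
\[ f_0^{k}(\exp tX)=\exp\big(t\,df_0^{k}X\big)=\exp\big(tS^{k}X+tY_k\big),\qquad Y_k\in\mathfrak{n}_{i,\alpha}^{j-1}, \]
where $\lVert S^{k}X\rVert=\lVert X\rVert$ for all $k$, so $tS^{k}X$ stays bounded, while $Y_k$ may grow polynomially in $|k|$.

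Let $Q$ be the connected subgroup whose Lie algebra is $\mathfrak{q}=\mathfrak{n}_{i,\alpha}^{j-1}+\mathfrak{n}_{i+1}$, a $df_0$-invariant ideal of $\mathfrak{n}_i$; thus $Q$ is normal in $N_i$ and $f_0$-invariant. From the level-$(j-1)$ hypothesis, $\mathcal{R}\cdot N_{i+1}\subset\mathcal{R}$, and Lemma~\ref{lema212} (which lets one write $\exp(Z+V)\in\exp(Z)N_{i+1}$ for $Z\in\mathfrak{n}_{i,\alpha}^{j-1}$ and $V\in\mathfrak{n}_{i+1}$), one checks by absorbing generators of $Q$ successively that $Q\subset\mathcal{R}$ and $\mathcal{R}\cdot Q\subset\mathcal{R}$. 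Since $tY_k\in\mathfrak{q}$, Lemma~\ref{lema212} gives $\exp(tS^{k}X+tY_k)\in\exp(tS^{k}X)\,Q$; and if $\lvert t\rvert<\rho/\lVert X\rVert$ then $\exp(tS^{k}X)\in\exp(B_\rho)\subset\mathcal{R}$ for every $k$, whence $f_0^{k}(\exp tX)\in\mathcal{R}\cdot Q\subset\mathcal{R}$ for all $k\in\Z$ (for $k\ge 0$ this is in any case immediate from $f_0^{k}(\mathcal{R})\subset\mathcal{R}$, by Proposition~\ref{reachablesetprop}). As also $\exp(tX)\in\mathcal{R}$, Lemma~\ref{AginA} yields $\mathcal{R}\cdot\exp(tX)\subset\mathcal{R}$ for such small $t$; writing $\exp(sX)=\exp((s/p)X)^{p}$ with $p$ large and iterating this inclusion $p$ times along the one-parameter subgroup generated by $X$ extends it to all $s\in\R$. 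This closes the secondary induction, and applying it with $j$ the top Jordan level of each $\mathfrak{n}_{i,\alpha}$ gives $\mathcal{R}\cdot\exp(tX)\subset\mathcal{R}$ for every $X$ in every generalized eigenspace of $\mathfrak{n}_i$; since these span $\mathfrak{n}_i$, we obtain $\mathcal{R}\cdot N_i\subset\mathcal{R}$, which completes the outer induction.

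I expect the heart of the matter to be exactly the step just sketched: because $\mathfrak{n}_i\subset\mathfrak{g}^0$ the eigenvalues of $df_0|_{\mathfrak{n}_i}$ have modulus $1$, yet nontrivial Jordan blocks make $df_0^{k}X$ grow polynomially in $|k|$, so $f_0^{k}(\exp tX)$ escapes every fixed neighbourhood of $e$ and the summand $tY_k$ cannot be split off by a Baker--Campbell--Hausdorff estimate (its norm is unbounded in $k$). The device that circumvents this is to split off only the \emph{bounded} semisimple part $\exp(tS^{k}X)$, via Lemma~\ref{lema212}, after having arranged — through the two nested inductions — that the leftover factor lies in a subgroup $Q$ with $\mathcal{R}\cdot Q\subset\mathcal{R}$. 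A further technical point is the treatment of non-real unit-modulus eigenvalues $\alpha$: there one works with the real primary decomposition and chooses an inner product making the semisimple part an isometry, so that $\lVert S^{k}X\rVert$ is indeed constant in $k$.
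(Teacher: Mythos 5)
Your argument is correct, and while it shares the paper's scaffolding --- downward induction along the derived series $\mathfrak{n}_i$, an inner induction on the Jordan level inside each generalized eigenspace $\mathfrak{n}_{i,\alpha}$, and absorption of error terms into $N_{i+1}$ via Lemma \ref{lema212} --- the engine of the key step is genuinely different. The paper fixes $v=X/m$ small enough that $\exp v\in\mathcal{R}_\tau$, chooses times $\tau n_k$ along a sequence with $\alpha^{n_k}\rightarrow 1$, and reassembles $\exp X$ as a Baker--Campbell--Hausdorff product of the reachable points $f_0^{\tau n_{k_a}}(\exp v)$, pushing all bracket corrections into $\mathfrak{n}_{i+1}$ and the leftover $u'$ into the lower Jordan level; this requires the recurrence sequence of \cite[Lemma 13]{CCS} and the convergence discussion of Remark \ref{neighborhoodB}. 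You instead prove the stronger absorption statement $\mathcal{R}\cdot\exp(tX)\subset\mathcal{R}$ directly from Lemma \ref{AginA}, taming the polynomial Jordan growth by the additive decomposition $df_0^k=S^k+(\text{level-lowering part})$, so that the entire two-sided orbit $f_0^k(\exp tX)$, $k\in\Z$, lies in $\exp(tS^kX)\,Q\subset\mathcal{R}\cdot Q\subset\mathcal{R}$ once $Q$ has been absorbed at the previous level; here the normality of $Q$ in $N_i$ is automatic because $\mathfrak{q}\supset\mathfrak{n}_{i+1}=[\mathfrak{n}_i,\mathfrak{n}_i]$. What your route buys is a proof with no BCH series and no appeal to recurrence of $\alpha^n$ on the unit circle, hence none of the delicate points the paper must address (the ``w.l.o.g.\ $\theta\in2\pi\Q$'' reduction and the convergence of the series); the price is the choice of an $S$-isometric norm and the real-form bookkeeping for non-real $\alpha$, which you flag. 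The only step I would spell out is the passage from ``$\mathcal{R}$ absorbs $\exp(tX)$ for $X$ in each generalized eigenspace'' to ``$\mathcal{R}\cdot N_i\subset\mathcal{R}$'': a general $Z\in\mathfrak{n}_i$ is a sum over eigenspaces and $\exp Z$ is not the product of the $\exp$ of its components, so one should either invoke the fact that $\prod_\alpha\exp(\mathfrak{n}_{i,\alpha})$ is a neighbourhood of $e$ in $N_i$ (exactly the paper's Remark \ref{neighborhoodB}) or note that the one-parameter subgroups $\exp(\R X)$, $X\in\bigcup_\alpha\mathfrak{n}_{i,\alpha}$, generate $N_i$ and that $\{g: \mathcal{R}g\subset\mathcal{R}\}$ is closed under products.
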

	
	\begin{proof}
		As $N$ is $f_0$-invariant, then $\mathfrak{n}$ is $df_0$-invariant, which implies that $df_0(\mathfrak{n})=\mathfrak{n}$. Consequently $df_0 (\mathfrak{n}_i)=\mathfrak{n}_i$, for any $i=1,\ldots,l+1.$ Let us denote by $N_i = \langle \exp{\mathfrak{n}_i} \rangle$ the connected subgroup associated with the ideal $\mathfrak{n}_i$, as in   (\ref{n_i}). Each $N_i$ is a normal subgroup of $N$ and $f_0$-invariant, since $f_0$ is an automorphism and each $\mathfrak{n}_i$ is ideal. 
		
		Note that if for every $X \in \mathfrak{n}_{i,\alpha}$ we have $\exp{X} \in \mathcal{R}$, then $N_i \in \mathcal{R}$. In fact, as $\mathfrak{n}_i = \bigoplus_{|\alpha|=1} \mathfrak{n}_{i,\alpha}$, the set 
		\begin{equation*}
			B = \prod_{|\alpha|=1} \exp{\mathfrak{n}_{i,\alpha}}
		\end{equation*}
		is a neighborhood of $e$ in $N_i$. In addition, knowing that $df_0^k(\mathfrak{n}_{i,\alpha}) = \mathfrak{n}_{i,\alpha}$ for every $k \in \Z$ we have 
		\begin{equation*}
			f_0^k(\exp{\mathfrak{n}_{i,\alpha}}) = \exp{(df_0^k(\mathfrak{n}_{i,\alpha}))} = \exp{\mathfrak{n}_{i, \alpha}},
		\end{equation*}
		for every $k\in \Z$. It is clear that $B$ is $f_0$ and $f_0^{-1}$-invariant and as $\exp{X} \in \mathcal{R}$ for every $X \in \mathfrak{n}_{i,\alpha}$, the Lemma \ref{AginA} ensures that $B \subset \mathcal{R}$. By Corollary \ref{corBinv} we get $N_i \subset \mathcal{R}$. 
		
		We claim that if $N_{i+1} \subset \mathcal{R}$ then $N_i \subset \mathcal{R}$. In fact, note that we only need to prove that $\exp{X} \in \mathcal{R}$, for every $X \in \mathfrak{n}_{i,\alpha}$ with $|\alpha| = 1$. Using   decomposition (\ref{n_i}), it is sufficient to prove that for every $j \in \N_0$ and $X \in \mathfrak{n}^j_{i,\alpha}$ we have $\exp{X} \in \mathcal{R}$. It is clear for $j=0$ since $\mathfrak{n}_{i, \alpha}^{0} = \{0\}$ and $e \in \hbox{int}\mathcal{R}$. If this is true for $j > 0$ then $\exp{Z} \in \mathcal{R}$ for every $Z \in \mathfrak{n}^{j-1}_{i, \alpha}$. Take $X \in \mathfrak{n}^{j}_{i, \alpha}$. Then there is an $m \in \N$ such that $v = \frac{X}{m} \in U$, where $\exp|_U$ is a diffeomorphism and $\exp(U) \subset \mathcal{R}$. Therefore, there is a $\tau \in \N$ such that $\exp{v} \in \mathcal{R}_{\tau}$. From Proposition \ref{reachablesetprop}, item 2, we have $\mathcal{R}_{\tau} \subset \mathcal{R}_{\tau n}$ for any $n \in \N$. Taking the polar form of a complex number, we have 
		\begin{equation*}
			\alpha^{\tau n} = \cos{(\tau n \theta)} + i \sin{(\tau n \theta)}. 
		\end{equation*}
		
		WLOG, let us assume that $\theta \in 2\pi \Q$. For any $n \in \N$, there exists an $N \in \Z$ such that $n\theta = 2N\pi$. Then $\alpha^{\tau n} = 1$.  In particular, we can take $n_{k_1}, n_{k_2} \in \N$ such that $v \in \mathcal{R}_{\tau(n_{k_2} - n_{k_1})}$. By taking $k \in \N$, we get 
		\begin{equation*}
			(df_0)^{k} v = (df_0 - \alpha + \alpha)^{k} v = \sum_{i=0}^{k} \binom{k}{i} (df_0 - \alpha)^{k - i} \alpha^i v =  \alpha^{k} v + v_k
		\end{equation*}
		with $\alpha^{k} v \in \mathfrak{n}_{i,\alpha}^j$ and 
		\begin{equation}\label{v_k}
			v_k =  \sum_{i=0}^{k -1} \binom{k}{i}(df_0 - \alpha)^{k   - i} \alpha^i v \in \mathfrak{n}_{j,\alpha}^{j-1}.
		\end{equation}
		for any $k \geq 1$. Moreover, we can consider 
		\begin{equation}\label{df0}
			\alpha^{k }v = df_0^{k } v + u_k, \forall k \in \N
		\end{equation}
		with $u_k = -v_k$. One can notice that $u_k \in \mathfrak{n}_{i,\alpha}^{j-1}$. Now, consider $k \in \N$ such that $\alpha^k v = v$, for any $n \leq k$. Equation (\ref{df0}) holds for every $k \in \N$. Using the BCH series, we obtain:
		\begin{equation*}
			\exp{(df_0^{ \tau n_{k_1} }v)}\exp{(df_0^{\tau n_{k_2} }v)} = \exp{(df_0^{ \tau n_{k_1} } v + df_0^{ \tau n_{k_2} } v + O_1)}, 
		\end{equation*}
		where $O_1$ is given by the Brackets between $df_0^{\tau n_{k_1} }v$ and $df_0^{\tau n_{k_2} }v$, which is in $[\mathfrak{n}, \mathfrak{n}^j_{i, \alpha}] \subset [\mathfrak{n}, \mathfrak{n}_i] =  \mathfrak{n}_{i+1}$ and we have $O_1 \in \mathfrak{n}_{i+1}$. By  Lemma \ref{lema212}, we obtain  
		\begin{equation*}
			\exp{(df_0^{\tau n_{k_1} } v + df_0^{\tau n_{k_2} } v + O_1)} = \exp{(df_0^{\tau n_{k_1} } v + df_0^{\tau n_{k_2} } v)}g_1 
		\end{equation*}
		for some $g_1 \in N_{i + 1} \subset \mathcal{R}$. By item $(4)$ in the Proposition \ref{reachablesetprop} we get
		\begin{equation*}
			\exp{(df_0^{ \tau n_{k_1} }v)}\exp{(df_0^{ \tau n_{k_2} }v)} = f_0^{ \tau n_{k_1}  }(\exp{v})f_0^{ \tau n_{k_2}  }(\exp{v}) \in \mathcal{R}_{2 \tau n_{k_2} - \tau n_{k_1}} \subset \mathcal{R}.
		\end{equation*}
		
		Once $g_1,g_1^{-1} \in N_{i+1} \subset \mathcal{R}$, we obtain
		\begin{equation*}
			\exp{(df_0^{\tau n_{k_1} }v + df_0^{\tau n_{k_2} }v)} = \exp{(df_0^{\tau n_{k_1} }v + df_0^{\tau n_{k_2} }v + O_1)} g_1^{-1} \in \mathcal{R}\cdot g_1^{-1} \subset \mathcal{R}. 
		\end{equation*}
		
		By Proposition \ref{reachablesetprop} item $2$, we can choose $n_{k_3} \in \{n_{s_j}\}_{j \in \N}$ such that
		\begin{equation*}
			\exp{(df_0^{\tau n_{k_1} }v + df_0^{\tau n_{k_2} }v)} \in \mathcal{R}_{\tau n_{k_3}}. 
		\end{equation*}
		
		Using BCH series we have
		\begin{equation}\label{3elements}
			\exp{(df_0^{\tau n_{k_1} }v + df_0^{\tau n_{k_2} }v)} \exp{(df_0^{\tau n_{k_3} } v)} = \exp{(\sum_{i=1}^3 df_0^{\tau n_{k_i} } v + O_2)}, O_2 \in \mathfrak{n}_{i+1}. 
		\end{equation}
		
		Again by the Lemma \ref{lema212} we get 
		\begin{equation*}
			\exp{(\sum_{i=1}^3 df_0^{\tau n_{k_i} } v + O_2)} = \exp{(\sum_{i=1}^3 df_0^{\tau n_{k_i} } v)}g_2, g_2 \in N_{i+1}.
		\end{equation*}
		
		Then
		\begin{equation*}
			\exp{(df_0^{\tau n_{k_1} }v + df_0^{\tau n_{k_2} }v)} \exp{(df_0^{\tau n_{k_3} } v)} \in \mathcal{R}_{  \tau (n_{k_3} + n_{k_2} - n_{k_1})} \subset \mathcal{R}.
		\end{equation*}
		and again by the $f_0$-invariance of $N_{i+1}$ we have 
		\begin{equation*}
			\exp{(\sum_{i=1}^3 df_0^{\tau n_{k_i} } v)} =\exp{(\sum_{i=1}^3 df_0^{\tau n_{k_i} } v+ O_2)}g_2^{-1} \in \mathcal{R}\cdot g_2^{-1} \subset \mathcal{R}. 
		\end{equation*}
		
		Repeating the idea $m-2$ times, we get 
		\begin{equation*}
			\exp{(\sum_{i=1}^m df_0^{\tau n_{k_i}  } v)} \in \mathcal{R}. 
		\end{equation*}
		
		Summing up to $m$ in the expression $\alpha^{k} v= df_0^{k} v + u_k $ we get
		\begin{equation}\label{X}
			X = \sum_{i=1}^{m} \alpha^{\tau n_{k_j}} v = \sum_{i=1}^{m} df_0^{\tau n_{k_i}  } v  + u' 
		\end{equation}
		from the fact that $\sum_{i=1}^{m} \alpha^{\tau n_{k_j}} = m$, with $u' \in \mathfrak{n}_{i, \alpha}^{j-1}$  defined by $u'=\sum_{j=1}^{m} u_{\tau n_{k_j}}$. Again, using the BCH formula, we have
		\begin{equation*}
			\exp(X + O) = \exp(\sum_{i=1}^{m} df_0^{\tau n_{k_i}  } v ) \exp(u')
		\end{equation*}
		where $O$ is the Lie brackets of $\sum_{i=1}^{m} df_0^{\tau n_{k_i}  } v $ and $u'$ which as before, we obtain $O \in \mathfrak{n}_{i+1}.$ In addition, as $u' \in \mathfrak{n}_{i,\alpha}^{j-1}$ and using the $df_0$ invariance and the induction hypothesis we have that  
		\begin{equation*}
			f_0^k(\exp{u'}) = \exp{(df_0^k(u'))} \in \mathcal{R}, \forall k \in \Z. 
		\end{equation*}
		
		The Lemma \ref{AginA} guarantees that 
		\begin{equation}\label{x+o}
			\exp{(X + O)}  = \exp{\left(\sum_{i=1}^{m} df_0^{\tau n_{k_i}  }v\right)} \exp{u'} \in \mathcal{R}\cdot \exp{u'} \subset \mathcal{R}
		\end{equation}
		
		Considering that 
		\begin{equation*}
			\exp{(X + O)} = \exp(X)g, g \in N_{i+1}, 
		\end{equation*}
		we have 
		\begin{equation*}
			\exp(X) = \exp{(X + O)}g^{-1} \in \mathcal{R}\cdot g^{-1} \subset \mathcal{R}. 
		\end{equation*}
		by the invariance of $N_{i+1}$. This implies that $N_i \subset \mathcal{R}$, as requested. 
		
		To prove that $N \subset \mathcal{R}$, as $N_{l + 1} = \{e\} \subset \mathcal{R}$, then $N_l \subset \mathcal{R}$ by the previous affirmation. Using this reasoning $l$ times, we get $N \subset \mathcal{R}$.
	\end{proof}
	
	\begin{remark}In the previous lemma, we assume without loss of generality that $\theta \in 2 \pi \mathbb{Q}$. If we suppose otherwise, one can consider a sequence $\{n_k\}_k \subset \N$ such that $\alpha^{n_k} = \cos{n_k \theta} + i \sin{n_k \theta} \longrightarrow 1$ such that $n_k \longrightarrow \infty$ \cite[Lemma 13]{CCS}. Summing up to $m$ in the expression $\alpha^k v = df_0^k v + u_k$, we  have 
		\begin{equation*}
			X + \varepsilon X = \left(\sum_{i=1}^{m} df_0^{\tau n_{k_i}  }v\right) + u'. 
		\end{equation*}
		with $\varepsilon \longrightarrow 0$ a complex number. Therefore, $X$ could be decomposed as 
		\begin{equation*}
			X = v + v', 
		\end{equation*}
		with $v' = u' - \varepsilon X \in \mathfrak{n}_{i,\alpha}^{j}$. Considering the BCH series of $v$ and $v'$ we get 
		\begin{equation*}
			\exp{(X + O)} = \exp{(u)}\exp{(v')}, 
		\end{equation*}
		where $O$ is the Lie brackets between $v$ and $v'$ which still belongs to $N_{i+1}$. This implies the equality  (\ref{x+o}) and the proof  follows in the same manner as above.
		
		Moreover, in the lemma above, the automorphism $df_0$ does not need to be in the form $e^{\mathcal{D}_1} \circ ... \circ e^{\mathcal{D}_k}$, given that such an expression is only used in the next proposition. 
	\end{remark}

	Finally, for the solvable case and using the fact of $df_0 = e^{\mathcal{D}_1} \circ ... \circ e^{\mathcal{D}_k}$, for some $k \geq 1$, we have the following proposition.
	
	\begin{proposition}\label{solvableteorema}Let $H \subset G^0$ be a solvable connected $f_0-$invariant Lie subgroup of $G^0$. Then $H \subset \mathcal{R}$. 
	\end{proposition}
	
	\begin{proof} Let $\mathfrak{h}$ be the Lie subalgebra of $H$ on $\mathfrak{g}^0$. As $H$ is connected, we have that $\mathfrak{h}$ is $df_0-$invariant. Taking $\mathfrak{n}$ as the nilradical of $\mathfrak{h}$, by \cite[Proposition 2.18]{sanmartin2} we get $\mathcal{D}_j(\mathfrak{h}) \subset \mathfrak{n}$. By the proposition (\ref{propnilpotent}) we get $H \subset \mathcal{R}$.
	\end{proof}
	
	Now, we are finally able to prove the following controllability condition.   
	
	\begin{theorem}\label{teoG^+0}Let $G$ be a connected solvable Lie group and consider the system (\ref{linearcontrolsystem}) defined on $G$. If $\mathcal{R}$ is open, then $G^{+,0} \subset \mathcal{R}$.   
	\end{theorem}
	
	\begin{proof}
		Space $\mathfrak{g}^+$ is an unstable subspace associated with the differential $df_0$. Note that $G^+ = \exp{\mathfrak{g}^+}$ is nilpotent and take $g \in  G^+$, then exists an $X \in \mathfrak{g}^+$ such that $g = \exp{X}$. As $0 \in \mathfrak{g}^+$ is stable in negative time, there is a $k \in \N$ such that $df_0^{-k} X$ is as close as necessary of $0$ for $k$ large enough. By continuity, 
		\begin{equation*}
			f_0^{-k}(\exp{X}) = \exp{ (df_0^{-k} X)} \in \mathcal{R}. 
		\end{equation*}
		
		Then $g=\exp{X}\in f_0^{-k}(\mathcal{R}) \subset \mathcal{R}$, that is $G^+ \subset \mathcal{R}$. Solvability of $G$ implies that $G^0$ is solvable. Therefore $G^0 \subset \mathcal{R}$ by Lemma \ref{G+0inreach}. Thus $G^{+,0} \subset \mathcal{R}$.
	\end{proof}
	
	The final result of this section provides sufficient conditions for the controllability of  system $(\ref{linearcontrolsystem}).$ recall that the controllability of (\ref{linearcontrolsystem}) can be seen by proving that $G = \mathcal{R} \cap\mathcal{C} =\mathcal{R} \cap \mathcal{R}^*$. The following result presents  a sufficient condition for the controllability of discrete-time linear systems on solvable Lie groups. 
	
	\begin{theorem}\label{theoremcontrol}The system (\ref{linearcontrolsystem}) is controllable if $\mathcal{R}$ and $\mathcal{C}$ are open sets and every eigenvalue of $df_0$ has norm equal $1$. 
	\end{theorem}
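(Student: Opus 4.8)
The plan is to reduce everything to Theorem \ref{teoG^+00}, applied first to the given system and then to its reversed counterpart (\ref{inverselinearcontrolsystem}). Recall from the remark preceding the statement that it suffices to show $G = \mathcal{R} \cap \mathcal{C} = \mathcal{R} \cap \mathcal{R}^*$.

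First I would unwind the hypothesis $G^0 = G$. Since $G$ is connected and $G^0$ is the connected subgroup whose Lie algebra is the subspace $\mathfrak{g}^0 \subseteq \mathfrak{g}$, the equality $G^0 = G$ forces $\mathfrak{g}^0 = \mathfrak{g}$, hence $\mathfrak{g}^+ = \mathfrak{g}^- = \{0\}$ by the decomposition (\ref{decomposition}), so $G^+ = G^- = \{e\}$ and therefore $G^{+,0} = G$. As $\mathcal{R}$ is assumed open, Theorem \ref{teoG^+00} applies and gives $G = G^{+,0} \subset \mathcal{R}$, that is, $\mathcal{R} = G$.

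Next I would run exactly the same argument on the reversed system (\ref{inverselinearcontrolsystem}). This is again a linear system on the same (still solvable) group $G$, with $\tilde f_0 = f_0^{-1}$, hence $d\tilde f_0 = df_0^{-1}$; its eigenvalues are the reciprocals of those of $df_0$, so they all still have absolute value $1$. Thus $\mathfrak{g}^0_* = \mathfrak{g}^0 = \mathfrak{g}$ and $G^0_* = G$, whence $G^{+,0}_* = G$. By Lemma \ref{setsinvsys} we have $\mathcal{R}^*_k = \mathcal{C}_k$ for all $k$, so $\mathcal{R}^* = \mathcal{C}$, which is open by hypothesis. Applying Theorem \ref{teoG^+00} to the reversed system then yields $G = G^{+,0}_* \subset \mathcal{R}^* = \mathcal{C}$, i.e. $\mathcal{C} = G$. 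Combining $\mathcal{R} = G$ and $\mathcal{C} = G$ gives $G = \mathcal{R} \cap \mathcal{C}$, which is controllability: from any $g \in G$ one reaches $e$ since $g \in \mathcal{C} = \mathcal{C}(e)$, and from $e$ one reaches any $h$ since $h \in \mathcal{R}(e) = G$, so concatenating the two controls via the cocycle property shows $h \in \mathcal{R}(g)$ and hence $\mathcal{R}(g) = G$ for every $g$.

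As for obstacles, there is essentially no genuine difficulty left here — all the real work lies in Lemma \ref{G+0inreach} and Theorem \ref{teoG^+00}. The only points needing a little care are purely bookkeeping: checking that (\ref{inverselinearcontrolsystem}) really qualifies as a linear system in the sense of Definition \ref{deflincontsys} (same compact control range $U \ni 0$, with $\tilde f_0$ an automorphism fixing $e$) and that solvability of $G$ is inherited, so that Theorem \ref{teoG^+00} is legitimately applicable to the reversed system; and tracking correctly the identifications $G^0_* = G^0$ and $\mathcal{R}^* = \mathcal{C}$.
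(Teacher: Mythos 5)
Your proposal is correct and follows the same route as the paper: apply Theorem \ref{teoG^+00} to the system and to its reversed counterpart (using $G^{+,0}=G^{+,0}_*=G$ when $G^0=G$, and $\mathcal{R}^*=\mathcal{C}$ from Lemma \ref{setsinvsys}) to get $\mathcal{R}=\mathcal{C}=G$, hence controllability. The extra bookkeeping you flag (the reversed system being linear with reciprocal eigenvalues, and the passage from $G=\mathcal{R}\cap\mathcal{C}$ to controllability via concatenation) is exactly what the paper leaves implicit.
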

	
	\begin{proof}
		From Theorem \ref{teoG^+0}, it follows that $G^{0,+} \subset \mathcal{R}$ and $G^{0,-} \subset \mathcal{R}^*=\mathcal{C}$. As $\mathfrak{g} = \mathfrak{g}^0$ then $G = \mathcal{R} \cap \mathcal{R}^*$. Then the system (\ref{linearcontrolsystem}) is controllable.
	\end{proof}
	
	\subsection{Controllability and accessibility of affine Lie group}
	
	In this subsection, based on  the previous results, we investigate the controllability of linear systems on the two-dimensional affine Lie group. The real abelian solvable Lie groups of dimension $2$ are $\R^2$, $\mathbb{T} \times \R$ and $\mathbb{T}^2$ (see e.g. \cite{onish}). In the non-abelian case, the unique (up to an isomorphism) real solvable lie group is the open half plane $G = \R^{+} \ltimes \R$, endowed with the product 
	\begin{equation*}
		(x_1,y_1) \cdot (x_2,y_2) = (x_1x_2, y_2 + x_2 y_1). 
	\end{equation*}
	
	The Lie group $(G, \cdot)$ is called affine group and denoted by $\hbox{Aff}(2,\R)$. In particular, the automorphisms of $\hbox{Aff}(2,\R)$ are given by 
	\begin{equation*}
		\phi(x,y) = (x,a(x-1) + dy),
	\end{equation*}
	with  $ d \in \R^*$ and $a \in \R$. Then, the linear systems on $\hbox{Aff}(2,\R)$ can be defined by the functions 
	\begin{equation}\label{sist2dim}
		f((x,y), u):=f_u(x,y) = (h(u)x, a(x-1) + dy + g(u)x),
	\end{equation}
	where $h: \R^m \rightarrow \R^+$ and $g: \R^m \rightarrow \R$ are $\mathcal{C}^{\infty}$ functions satisfying $h(0)=1$ and $g(0)=0$. In fact, note that $f_0(x,y) = (x,a(x-1) + dy)$ is an automorphism of $\hbox{Aff}(2,\R)$. Note that 
	\begin{equation*}
		f_0^{-1}(x,y) = \left(x,\frac{-a}{d}(x-1) + \frac{y}{d}\right). 
	\end{equation*}
	and
	\begin{eqnarray*}
		f_u(x,y) &=& (h(u)x, a(x-1) + dy + g(u)x)\\
		&=&(h(u),g(u))(x,a(x-1) + dy)\\
		&=&f_u(1,0)f_0(x,y),
	\end{eqnarray*}
	for each $u\in U$ and $(x,y)\in\hbox{Aff}(2,\R)$. Hence $f$ defines a linear system on $\hbox{Aff}(2,\R)$ given by
	\begin{equation}\label{generalsystem}
		x_{k+1} = f(x_k, u_k),\ k \in \N,\ u\in U,
	\end{equation}
	where  $U$ is assumed to be a compact and convex neighborhood of $0\in\mathbb{R}^m$. Moreover, 
	\begin{equation*}
		f_0^k(x,y) = \left(x, a(x-1)(\sum_{j=0}^{k-1}d^{j}) + d^ky\right), \forall k \geq 1. 
	\end{equation*}
	
	From this  equality and Proposition (\ref{prop52}), one can define the solution $\varphi(k,(x,y),u)$ for every $(x,y) \in \hbox{Aff}(2,\R)$ and $u\in\mathcal{U}$. In the identity we have 
	\begin{equation*}
		\varphi(k,(1,0),u) = \left\{
		\begin{array}{ccc}
			(1,0),&if& k=0\\
			(h(u_0),g(u_0)), &if& k = 1\\
			\hat{f}_k(u), &if& k \geq 2
		\end{array},
		\right.
	\end{equation*}
	with 
	\begin{equation*}
		\hat{f}_k(u) = \left(\prod_{j=0}^{k-1} h(u_{k-1-j}), -a\left(\sum_{j=0}^{k-2} d^j\right) + d^{k-1} g(u_0) + \sum_{j=0}^{k-2} \left(d^{k-2-j}(a+g(u_{j+1}))\prod_{i=0}^j h(u_i)\right)\right).
	\end{equation*}
	
	Then by Proposition (\ref{prop52}), we get 
	\begin{eqnarray*}
		\varphi(k,(x,y),u) &=& \bigg(\prod_{j=0}^{k-1} h(u_{k-1-j})x, d^{k-1}(a + g(u_0))x + \sum_{j=0}^{k-2} d^{k-2-j}(a + g(u_{j+1}))\prod_{i=0}^j h(u_i) x\\
		& & + d^k y - a (\sum_{j=0}^{k-1}d^{k-1-j})\bigg). 
	\end{eqnarray*}
	for every $(x,y) \in\hbox{Aff}(2,\R)$ and $u=(u_i)\in\mathcal{U}$. 
	
	\begin{remark} As  previously discussed, it is challenging to work with the solution of (\ref{generalsystem}) defined above. However, considering the case $h(u)=1$, for every $u=(u_i)\in \mathcal{U}$, the solution is given by 
		\begin{equation*}
			\varphi(k,(x,y),u) = \left(x, \sum_{j=0}^{k-1} d^{k-1-j}((a + g(u_{j}))x -a) + d^k y \right).
		\end{equation*}
		This means that for each $(x,y) \in\hbox{Aff}(2,\R)$, the solutions are contained in the set $\{x\} \times \R$. Then $\hbox{int}\mathcal{R}(x,y) = \emptyset$, for every pair $(x,y) \in \hbox{Aff}(2,\R)$. Therefore, the results in the previous sections do not apply to this case.
	\end{remark}
	
	Consider the linear system (\ref{generalsystem}) and define the vector fields
	\begin{eqnarray*}
		X^+_u(x) &=& \frac{\partial}{\partial v}\bigg|_{v=0} f_u^{-1} \circ f_{u+v}(x)\\
		X^-_u(x) &=& \frac{\partial}{\partial v}\bigg|_{v=0} f_u \circ f_{u+v}^{-1}(x)\\
		\hbox{Ad}_{u_k\ldots u_1}X^+_{u_0}(x) &=& (df_{u_k} \circ \cdots\circ f_{u_1})_e^{-1}X^+_{u_0}(f_{u_k} \circ \cdots \circ f_{u_1}(x)).\\
		\hbox{Ad}^{-1\cdots-1}_{u_k\ldots u_1}X^-_{u_0}(x) &=& (df_{u_k}^{-1} \circ\cdots \circ f_{u_1}^{-1})_e^{-1}X^-_{u_0}(f_{u_k}^{-1} \circ\cdots\circ f_{u_1}^{-1}(x)). 
	\end{eqnarray*}
	Moreover, define the sets
	\begin{eqnarray*}
		\Gamma^+=\{\hbox{Ad}_{u_k\cdots u_1}X^+_{u_0}: k \in \N, u_k,\ldots,u_0 \in U\}\\
		\Gamma^-=\{\hbox{Ad}^{-1\cdots-1}_{u_k\cdots u_1}X^-_{u_0}: k \in \N, u_k,\ldots,u_0 \in U\}
	\end{eqnarray*}
	
	Considering $\Gamma^+(x)$ the vector space spanned by the vectors in $\Gamma^+$ on $x$, Jakubcyk and Sontag \cite{sontag} proved the following accessibility criteria.
	
	\begin{theorem}Consider a discrete-time control system of   form (\ref{sistgeneral}) on a $n$-dimensional smooth manifold $M$, where the control range $U \subset \R^m$ is a compact and convex neighborhood of $0$ in $\R^m$ and $f_u: M \rightarrow M$ is a diffeomorphism for any $u \in U$.
		\begin{itemize}
			\item[1-] The system is forward accessible if and only if 
			\begin{equation*}
				\dim\Gamma^+(x) = n, \forall x \in M. 
			\end{equation*}
			\item[2-] The system is backward accessible if and only if 
			\begin{equation*}
				\dim\Gamma^-(x) = n, \forall x \in M. 
			\end{equation*}
			\item[3-] The system is accessible if both conditions are satisfied.
		\end{itemize}
	\end{theorem}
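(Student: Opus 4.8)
The plan is to prove item 1 only. Since each $f_u$ is a diffeomorphism, the ``reversed'' system with transition maps $f_u^{-1}$ is again of the form (\ref{sistgeneral}); its reachable sets are the controllable sets $\mathcal{C}(x)$ of the original system, and the fields $X^-_u$, the operator $\adjt^{-1\cdots-1}$ and the set $\Gamma^-$ are precisely $X^+$, $\adjt$, $\Gamma^+$ built from it. Hence item 2 is item 1 for the reversed system, and item 3 is immediate from items 1--2 together with the definition of accessibility. So I fix $x\in M$ and, for $k\in\N$, consider the smooth map $F_k(x,\cdot)\colon U^k\to M$, $F_k(x,u)=f_{u_{k-1}}\circ\cdots\circ f_{u_0}(x)$, whose image is $\mathcal{R}^k(x)$, so that $\mathcal{R}(x)=\bigcup_{k\in\N}\mathcal{R}^k(x)$.

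The first step is a chain-rule identity relating $d_uF_k$ to the fields $X^+$. Writing $y_0=x$ and $y_i=f_{u_{i-1}}\circ\cdots\circ f_{u_0}(x)$, and using $\frac{\partial}{\partial v}\big|_{v=0}f_{u_i+v}(y)=d(f_{u_i})_y X^+_{u_i}(y)$, one gets $\frac{\partial}{\partial u_i}F_k(x,u)=d(f_{u_{k-1}}\circ\cdots\circ f_{u_i})_{y_i}\circ X^+_{u_i}(y_i)$; transporting the image of $d_uF_k(x,u)$ back to $T_xM$ by the inverse of the differential at $x$ of the diffeomorphism $f_{u_{k-1}}\circ\cdots\circ f_{u_0}$ exhibits it as a sum of subspaces $d(f_{u_0}^{-1}\circ\cdots\circ f_{u_{i-1}}^{-1})_{y_i}\big(\imag X^+_{u_i}(y_i)\big)$, each of which is of the form $\adjt_w X^+_v(x)$ and hence lies in $\Gamma^+(x)$. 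In particular $\operatorname{rank} d_uF_k(x,u)\le\dim\Gamma^+(x)\le n$ for all $k$ and all $u\in(\inte U)^k$. This also gives the easy half of item 1: if $\operatorname{rank} d_uF_k(x,u^*)=n$ for some $u^*\in(\inte U)^k$, the submersion theorem makes $F_k(x,\cdot)$ open near $u^*$, so $\mathcal{R}^k(x)\subseteq\mathcal{R}(x)$ contains a neighbourhood of $F_k(x,u^*)$ and $\inte\mathcal{R}(x)\ne\emptyset$.

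For necessity I would fix $x$ with $\inte\mathcal{R}(x)\ne\emptyset$; since $\mathcal{R}(x)=\bigcup_k\mathcal{R}^k(x)$ is a countable union of compact sets, Baire's theorem gives some $\mathcal{R}^{k_0}(x)$ with non-empty interior, and then Sard's theorem applied to the smooth map $F_{k_0}(x,\cdot)$ produces a regular point $u^*\in(\inte U)^{k_0}$ -- the boundary of $U^{k_0}$ being negligible, which is where convexity of $U$ is convenient. Then $\operatorname{rank} d_uF_{k_0}(x,u^*)=n$, and the identity of the first step forces $\dim\Gamma^+(x)=n$; as $x$ was arbitrary, forward accessibility implies $\dim\Gamma^+\equiv n$.

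The substantive direction, and the one I expect to be the main obstacle, is the converse: from $\dim\Gamma^+(z)=n$ for \emph{every} $z\in M$, produce a single interior word at which $d_uF_k(x,\cdot)$ has rank $n$. The plan is to let $r\le n$ be the maximum of $\operatorname{rank} d_uF_k(x,u)$ over all $k$ and all $u\in(\inte U)^k$ (attained, being integer valued and bounded by $n$), realized at $(k^*,u^*)$ with endpoint $z^*=F_{k^*}(x,u^*)$ and with $V^*\subseteq T_xM$, $\dim V^*=r$, the transported image from the first step. Prolonging $u^*$ by further interior controls and inserting one perturbation slot at a control $v$ enlarges $V^*$ only by subspaces of the form $\adjt_w X^+_v(x)$ with $w$ ranging over all prolongations; by maximality of $r$ each of these is already contained in $V^*$. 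Since every generator of $\Gamma^+(z^*)$, pulled back from $z^*$ to $x$ along the trajectory $u^*$, is exactly of this form, one gets $\dim\Gamma^+(z^*)\le r$, contradicting the hypothesis at $z^*$ unless $r=n$; with $r=n$ the submersion argument of the second step finishes item 1. The two delicate points are the bookkeeping in the chain-rule identity -- tracking how a perturbation slot is transported through all later maps, which, unlike in the continuous-time theory, never calls for Lie brackets because $\Gamma^+$ is a plain span of $\adjt$-transported fields -- and the transfer of the rank deficiency of a maximal trajectory to $\Gamma^+$ at its endpoint, which is exactly where the hypothesis that $\dim\Gamma^+(z)=n$ for \emph{every} $z$, not merely at $x$, is genuinely used; a little care with interior versus boundary controls is also required.
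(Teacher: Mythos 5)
This theorem is not proved in the paper at all: it is quoted verbatim from Jakubczyk and Sontag \cite{sontag} as a known accessibility criterion, so there is no in-paper argument to compare against. Your sketch is, in effect, a reconstruction of the Jakubczyk--Sontag proof, and its architecture is sound: the chain-rule identity expressing each partial derivative $\partial F_k/\partial u_i$, after transport back to $T_xM$ along the inverse of the word, as the generator $\mathrm{Ad}_{u_{i-1}\cdots u_0}X^+_{u_i}(x)$ is correct (the contribution of slot $i$ depends only on the prefix $u_0,\dots,u_i$, which is exactly what makes the maximal-rank argument work); the reduction of item 2 to item 1 via the reversed system is legitimate since the reversed system's $X^+$ and $\mathrm{Ad}$ are precisely $X^-$ and $\mathrm{Ad}^{-1\cdots-1}$; and the prolongation/maximality argument correctly localizes the use of the hypothesis at the endpoint $z^*$ rather than at $x$. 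Two points deserve more care than your sketch gives them. First, in the necessity direction the phrase ``the boundary of $U^{k_0}$ being negligible'' is not literally true: a smooth map can send the $(mk_0-1)$-dimensional set $\partial(U^{k_0})$ onto a set of positive measure in $M$ when $mk_0-1\geq n$. The argument survives either because regularity is an open condition and $(\inte U)^{k_0}$ is dense in $U^{k_0}$ (convexity), or, more cleanly, by running the contrapositive: if $\dim\Gamma^+(x)<n$ then your rank identity makes every point of $U^k$ critical for $F_k(x,\cdot)$, so by Sard each $\mathcal{R}_k(x)$ is null and $\mathcal{R}(x)$ has empty interior. Second, in the maximality step the generators of $\Gamma^+(z^*)$ involve arbitrary controls in $U$, while the maximum rank $r$ is taken over interior words; you need the density of interior words plus closedness of $V^*$ (and lower semicontinuity of rank) to push the containment to boundary words. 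You flag both issues but do not resolve them; with those repairs the proof is complete.
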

	
	For linear systems on $\hbox{Aff}(2,\R)$, the previous result can be applied   to obtain a sufficient condition for the accessibility of (\ref{generalsystem}).
	
	\begin{proposition}\label{accesscriteria}The system (\ref{generalsystem}) is accessible if $-a h'(0) \neq g'(0)(d-1)$ and $h'(0) \neq 0$.
	\end{proposition}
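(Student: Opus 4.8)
The strategy is to apply the Jakubczyk–Sontag accessibility criterion (the theorem stated just above) to the linear system $(\ref{generalsystem})$ on $G = \hbox{Aff}(2,\R)$, which has dimension $n = 2$. Thus it suffices to show that $\dim\Gamma^+(x) = 2$ and $\dim\Gamma^-(x) = 2$ for every $x = (p,q) \in G$, under the hypotheses $-a h'(0) \neq g'(0)(d-1)$ and $h'(0) \neq 0$. Since $\Gamma^+(x)$ is spanned by the vectors $\hbox{Ad}_{u_k\cdots u_1}X^+_{u_0}(x)$, I only need to exhibit, at each $x$, two such vectors that are linearly independent; I will take $k=0$ together with $k=1$ (one extra iteration), which already turns out to be enough.

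First I would compute $X^+_u$ at $u=0$. From $(\ref{sist2dim})$ and the formula for $f_0^{-1}$ one gets, after differentiating $f_0^{-1}\circ f_{0+v}$ in $v$ at $v=0$, an explicit vector field on $G$; at the identity $e=(1,0)$ its value is $X^+_0(e) = (h'(0), g'(0))$ (in the coordinate chart $(x,y)$, identifying $T_eG \cong \R^2$), and at a general point $x=(p,q)$ one obtains $X^+_0(x)$ by the structure of the system — the first component is $h'(0)\,p$ and the second is an affine expression in $p$ involving $g'(0)$ and $a$. Next I would compute $\hbox{Ad}_{u_1}X^+_{u_0}(x)$ for $u_0 = u_1 = 0$: this is $(df_0)_e^{-1}X^+_0(f_0(x))$, i.e. push the vector field value at $f_0(x)$ back through the linear map $(df_0)_e^{-1}$. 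Since $(df_0)_e$ is diagonal-ish with entries governed by $d$ and $a$ (read off from $f_0^k$), this is a routine linear-algebra computation. The key point is that the two resulting vectors $X^+_0(x)$ and $\hbox{Ad}_{0}X^+_0(x)$ in $T_xG$ are linearly independent precisely when a $2\times2$ determinant — which will factor into something proportional to $h'(0)$ times $\big(g'(0)(d-1) + a h'(0)\big)$ (up to a nonzero power of $d$, recalling $d \in \R^*$) — is nonzero. This is exactly the stated hypothesis, so forward accessibility follows.

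For backward accessibility I would run the entirely parallel computation with $X^-_u$ and $\hbox{Ad}^{-1\cdots-1}$: note that $X^-_u(x) = \frac{\partial}{\partial v}\big|_{v=0} f_u\circ f^{-1}_{u+v}(x)$ is, up to sign and the substitution $d \mapsto d^{-1}$, $a \mapsto -a/d$, the analogue of $X^+_u$ for the reversed system, whose zero-order automorphism is $f_0^{-1}$. Because the reversed system is again a linear system on $\hbox{Aff}(2,\R)$ of the same form (with parameters $d^{-1}$ and $-a/d$ in place of $d$ and $a$), the same determinant criterion applies and, after clearing denominators (which only introduces nonzero factors since $d \neq 0$), reduces to the very same condition $-a h'(0) \neq g'(0)(d-1)$ together with $h'(0)\neq 0$. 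Hence $\dim\Gamma^-(x) = 2$ for all $x$, and item $3$ of the cited theorem gives accessibility.

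The main obstacle is bookkeeping rather than conceptual: correctly writing down $X^+_u$ and $X^-_u$ as vector fields (not just their values at $e$), correctly transporting them by $\hbox{Ad}$, and then extracting the determinant in a way that transparently exhibits the factors $h'(0)$ and $g'(0)(d-1)+a h'(0)$. One should be careful that the computation is done at an \emph{arbitrary} point $x=(p,q)$, not only at the identity — though, by left-invariance-type considerations for linear systems (Proposition \ref{prop52} and item $3$ of Proposition \ref{reachablesetprop} relating reachable sets at $g$ to those at $e$ via $f_0^k$), the non-vanishing of the determinant at $e$ propagates to all $x$, so it is enough to check it at $e$, where the vectors are simply $(h'(0),g'(0))$ and its image under $(df_0)_e^{-1}$.
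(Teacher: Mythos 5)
Your proposal follows essentially the same route as the paper: apply the Jakubczyk--Sontag criterion, compute $X^{\pm}_u$ and their $\mathrm{Ad}$-transports at $u=0$, and observe that the relevant $2\times 2$ determinant factors as a nonzero multiple of $h'(0)\bigl(ah'(0)+g'(0)(d-1)\bigr)$ times a power of $x>0$, so the check at the identity does propagate to every point. The only cosmetic difference is your choice of generators ($k=0$ together with $k=1$, versus the paper's pairs with $k=1$ and $k=2$); since the paper's $\Gamma^{\pm}$ are indexed by $k\in\N$, you should strictly use at least one $\mathrm{Ad}$-transport, but this changes nothing in the resulting determinant condition.
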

	
	\begin{proof}
		Given any $u \in U$ we have
		\begin{equation*}
			df_u = 
			\begin{bmatrix}
				h(u) & 0\\
				a+g(u) & d
			\end{bmatrix}
			\mbox{ and }
			(df_u)^{-1} = 
			\begin{bmatrix}
				\dfrac{1}{h(u)} & 0\\
				-\dfrac{a+g(u)}{d h(u)} & \dfrac{1}{d}
			\end{bmatrix}.
		\end{equation*}
		
		The compositions $f_u^{-1} \circ f_{u + v}(x,y)$ and $f_u \circ f_{u+v}^{-1}(x,y)$ are given by 
		\begin{eqnarray*}
			f^{-1}_u \circ f_{u+v}(x,y) &=& \left(\frac{h(u+v)}{h(u)}x,\frac{x}{d}\left(g(u+v) + a-\frac{h(u+v)}{h(u)}(a+g(u))\right) + y\right),\\
			f_u \circ f_{u+v}^{-1}(x,y) &=& \left(\frac{h(u)}{h(u+v)}x, y + x\left(\frac{g(u)}{h(u+v)} - \frac{g(u+v)}{h(u+v)}\right)\right).  
		\end{eqnarray*}
		
		Then 
		\begin{eqnarray*}
			X^+_u(x,y) &=& \left(\frac{h'(u)}{h(u)}x, \frac{x}{d}\left(\frac{h'(u)}{h(u)}(-a-g(u)) + g'(u)\right)\right), \\
			X^-_{u}(x,y) &=& \left(-\frac{h'(u)}{h(u)}x, -\frac{g'(u)}{h(u)}x\right).
		\end{eqnarray*}
		
		Besides, we have
		\begin{eqnarray*}
			\hbox{Ad}_{u_1} X^+_{u_0}(x,y) &=&
			\begin{bmatrix}
				\dfrac{h'(u_0)}{h(u_0)}x\\
				-\dfrac{a+g(u_1)}{dh(u_0)}h'(u_0)x - \dfrac{(a + g(u_0))h'(u_0) h(u_1)}{d^2 h(u_0)}x + \dfrac{g'(u_0)h(u_1)}{d^2}x
			\end{bmatrix},\\
			\hbox{Ad}^{-1}_{u_1} X_{u_0}^-(x,y) &=&
			\begin{bmatrix}
				\dfrac{h'(u_0)}{h(u_0)}x\\
				-\dfrac{a+g(u_1)}{h(u_0)h(u_1)}h'(u_0)x - \dfrac{dg'(u_0)}{ h(u_0) h(u_1)}x
			\end{bmatrix},
		\end{eqnarray*}
		\begin{eqnarray*}
			\hbox{Ad}_{u_2 u_1} X^+_{u_0}(x,y) = 
			\begin{bmatrix}
				\dfrac{h'(u_0)}{h(u_0)}x\\
				T_{21}(x,u_0,u_1,u_2)
			\end{bmatrix}\mbox{ and }
			\hbox{Ad}^{-1 -1}_{u_2 u_1} X^+_{u_0}(x,y) = 
			\begin{bmatrix}
				\dfrac{h'(u_0)}{h(u_0)}x\\
				S_{21}(x,u_0,u_1,u_2)
			\end{bmatrix},
		\end{eqnarray*}
		with 
		\begin{eqnarray*}
			T_{21}(x,u_0,u_1,u_2) &=& -\dfrac{a+g(u_2)}{dh(u_0)}h'(u_0)x - \dfrac{(a + g(u_1))h'(u_0) h(u_2)}{d^2 h(u_0)}x\\
			& &-\dfrac{(a + g(u_0))h'(u_0) h(u_1)h(u_2)}{d^3 h(u_0)}x+ \dfrac{g'(u_0)h(u_2)h(u_1)}{d^3}x,\\ S_{21}(x,u_0,u_1,u_2) &=& -\frac{(a+g(u_1))h'(u_0)x}{h(u_0)h(u_1)} - \frac{d(a+g(u_2))h'(u_0)x}{h(u_0)h(u_1)h(u_2)} - \frac{d^2 g'(u_0)x}{h(u_0)h(u_1)h(u_2)}.
		\end{eqnarray*}
		
		By taking $u_0 = u_1 = u_2 = 0$ in the above vector fields above we get 
		\begin{eqnarray*}
			\hbox{Ad}_{u_2 u_1} X^+_{u_0}(x,y) &=& 
			\begin{bmatrix}
				h'(0)x\\
				-\dfrac{a h'(0)x}{d}\left(\dfrac{1}{d^2} + \dfrac{1}{d} + 1\right) + \dfrac{g'(0)}{d^3}x 
			\end{bmatrix},\\
			\hbox{Ad}_{u_1} X^+_{u_0}(x,y) &=&
			\begin{bmatrix}
				h'(0)x\\
				-\dfrac{a h'(0)x}{d}\left(\dfrac{1}{d} + 1\right) + \dfrac{g'(0)}{d^2}x
			\end{bmatrix},
		\end{eqnarray*}
		\begin{eqnarray*}
			\hbox{Ad}^{-1 -1}_{u_2 u_1} X^-_{u_0}(x,y) &=& 
			\begin{bmatrix}
				h'(0)x\\
				-a h'(0)x - da g'(0)x - d^2 g'(0)x
			\end{bmatrix},
		\end{eqnarray*}
		and 
		\begin{eqnarray*}
			\hbox{Ad}^{-1}_{u_1} X^-_{u_0}(x,y) &=&
			\begin{bmatrix}
				h'(0)x\\
				-a h'(0)x - d g'(0)x
			\end{bmatrix}.
		\end{eqnarray*}
		
		If we consider the sets 
		\begin{eqnarray*}
			\alpha= \{\hbox{Ad}^{-1 -1}_{u_2 u_1} X^-_{u_0}(x,y),\hbox{Ad}^{-1}_{u_1} X^-_{u_0}(x,y)\} \hbox{ and }
			\beta =  \{\hbox{Ad}_{u_2 u_1} X^+_{u_0}(x,y),\hbox{Ad}_{u_1} X^+_{u_0}(x,y)\}
		\end{eqnarray*}
		where $u_0 = u_1 = u_2 = 0$, one can prove that $\alpha$ and $\beta$ are linear independent sets if, and only if, $h'(0) \neq 0$ and $-ah'(0) \neq g'(0)(d-1)$ as required.
	\end{proof}
	
	We now prove that this condition guarantees that $\mathcal{R}$ and $\mathcal{C}$ are open sets. 
	
	\begin{proposition}\label{opennessofR}If $-a h'(0) \neq g'(0)(d-1)$ and $h'(0) \neq 0$, then it is reachable and controllable sets are open.
	\end{proposition}
	
	\begin{proof}
		Initially, using the notation $f_u \circ f_v := f_{u,v}$, note that
		\begin{equation*}
			\frac{\partial}{\partial (u,v)} f_{u,v}(e) = 
			\begin{bmatrix}
				h'(u)h(v) & h'(v)h(u)\\
				g'(u)h(v) & ah'(v) + dg'(v) + h'(v)g(u)
			\end{bmatrix}
		\end{equation*}
		and for $u = v = 0$ one get 
		\begin{equation}\label{matrixaccess}
			\frac{\partial}{\partial (u,v)} f_{u,v}(e) = 
			\begin{bmatrix}
				h'(0) & h'(0)\\
				g'(0) & ah'(0) + dg'(0)
			\end{bmatrix}
		\end{equation}
		since $g(0) = 0$ and $h(0) = 1$. Then, the matrix above has rank $2$ if, and only if, $-a h'(0) \neq g'(0)(d-1)$ and $h'(0) \neq 0$. The matrix in (\ref{matrixaccess}) guarantees that $e \in \hat{\mathcal{R}}$, which is open. Therefore, as $\hat{\mathcal{R}} \subset \mathcal{R}$  we have $e \in \hbox{int}\mathcal{R}$. From Proposition (\ref{reachablesetprop}) item $6$, $\mathcal{R}$ is open. Analogously, we show that the same condition is   valid for the reversed counterpart system
		\begin{equation*}
			\Sigma': g_{k+1} = f_{u_k}^{-1}(e)f_0^{-1}(g_k)
		\end{equation*}
		
		Therefore $\mathcal{C}=\mathcal{R}^*$ is also open.
	\end{proof}
	
	We conclude this section by presenting a sufficient condition for the controllability of (\ref{generalsystem}), for the case in which  $f_0$ is an inner automorphism. 
	
	\begin{theorem}Consider the discrete-time linear system (\ref{generalsystem}) where $f_0$ is an inner automorphism. If $d=1$ and $ah'(0) \neq 0$, then (\ref{generalsystem}) is controllable. 
	\end{theorem}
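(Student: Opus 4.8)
The plan is to combine the accessibility analysis carried out in this subsection with the general controllability criterion of Theorem \ref{theoremcontrol}. First, observe that the hypotheses $h'(0)\neq 0$ and $-ah'(0)\neq g'(0)(d-1)$ are precisely the conditions appearing in Proposition \ref{accesscriteria}, so the system (\ref{generalsystem}) is accessible. By Theorem \ref{opennessofR}, accessibility is equivalent to both $\mathcal{R}$ and $\mathcal{C}$ being open subsets of $\hbox{Aff}(2,\R)$, so this already produces the openness hypotheses required by Theorem \ref{theoremcontrol}.

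Next I would extract the dynamical decomposition of $\hbox{Aff}(2,\R)$ relative to $df_0$. From the formula computed earlier, $df_u = \begin{bmatrix} h(u) & 0 \\ a+g(u) & d \end{bmatrix}$, so $df_0 = \begin{bmatrix} 1 & 0 \\ a & d \end{bmatrix}$, whose eigenvalues are $1$ and $d$. Imposing $d=1$ makes every eigenvalue of $df_0$ have absolute value equal to $1$, hence $\mathfrak{g}^0 = \mathfrak{g}$ and consequently $G^0 = G$ for the Lie subgroups associated to the decomposition (\ref{decomposition}).

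Finally, since $\hbox{Aff}(2,\R)$ is a connected solvable Lie group, the sets $\mathcal{R}$ and $\mathcal{C}$ are open, and $G^0 = G$, Theorem \ref{theoremcontrol} applies verbatim and gives that (\ref{generalsystem}) is controllable.

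I do not expect a genuine obstacle here: all the substance lives in the previously established results, and the argument is essentially an assembly. The only points deserving care are (i) matching the accessibility hypotheses exactly with Proposition \ref{accesscriteria}, and (ii) verifying the eigenvalue computation so that the assumption $d=1$ does force $G^0=G$ — note that $d=1$ is in fact stronger than strictly necessary, since $|d|=1$ (i.e. $d=\pm 1$) would already suffice for $G^0=G$.
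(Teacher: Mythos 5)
Your proposal is correct and follows essentially the same route as the paper: openness of $\mathcal{R}$ and $\mathcal{C}$ via the accessibility criterion and Theorem \ref{opennessofR}, then $G^0=G$ from the eigenvalues $1$ and $d$ of $df_0$, and finally Theorem \ref{theoremcontrol}. Your side remark that $|d|=1$ (so $d=\pm 1$) already forces $G^0=G$ is a valid observation that slightly sharpens the stated hypothesis.
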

	
	\begin{proof}
		In fact, the conditions $h'(0) \neq 0$ and $-ah'(0) \neq g'(0)(d-1)$ imply by  Theorem (\ref{opennessofR}) that the sets $\mathcal{R}$ and $\mathcal{C}$ are open. Furthermore, since $d$ and $1$ are the only eigenvalues of $df_0$, according to Theorem (\ref{theoremcontrol}) the assumption $d=1$ implies that the system in (\ref{generalsystem}) is controllable. 
	\end{proof}
	
	\begin{remark}In the previous results, we are considering $g:U \longrightarrow \R$ as a non-zero $\mathcal{C}^{\infty}$ function. 
	\end{remark}
	
	\section{Controllability on nilpotent Lie groups}\label{section4}
	
	In this section we present and kind of converse of Theorem (\ref{theoremcontrol}). It is presumed that $G$ is a connected, simply connected, and nilpotent Lie group. And in this case  each connected subgroups are closed and simply connected (cf. \cite[Corollary 1.134]{Knapp}).
	
	To achieve our goal, we need the following lemma. 
	
	\begin{lemma}\label{relativelycompact}
		Suppose that $df_0$ has no eigenvalue with absolute value greater than $1$ and $\mathcal{R}$ is open. If $M:=G/G^0$ admits a $G$-invariant metric, then $\mathcal{R}_{G^-}=\mathcal{R}\cap G^-$ is a relatively compact set.
	\end{lemma}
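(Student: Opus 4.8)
The plan is to transport the problem to the homogeneous space $M=G/G^0$ and to use there both the given $G$-invariant (Riemannian) metric and the contraction of $df_0$ along $\mathfrak{g}^-$. First I would set up the reduction. Since $df_0$ has no eigenvalue of absolute value larger than $1$ we have $\mathfrak{g}^+=\{0\}$ and $\mathfrak{g}=\mathfrak{g}^0\oplus\mathfrak{g}^-$, and from $[\mathfrak{g}^0,\mathfrak{g}^-]\subset\mathfrak{g}^-$ and $[\mathfrak{g}^-,\mathfrak{g}^-]\subset\mathfrak{g}^-$ the subalgebra $\mathfrak{g}^-$ is an ideal, so $G^-$ is a closed normal subgroup of $G$ (recall all connected subgroups of $G$ are closed). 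By Proposition \ref{decomposable} one has $G=G^0G^-=G^-G^0$, while $G^-\cap G^0=\{e\}$ because $\exp$ is injective; hence the projection $\pi\colon G\to M$ restricts to a bijective local diffeomorphism, and therefore a diffeomorphism, $\iota\colon G^-\to M$, $\iota(h)=hG^0$, satisfying $\iota\circ L_h=\mathcal{L}_h\circ\iota$ for $h\in G^-$ and $\bar{f}_0\circ\iota=\iota\circ(f_0|_{G^-})$. Since $M$ carries a $G$-invariant Riemannian metric it is a homogeneous Riemannian manifold, hence complete, so bounded subsets are relatively compact by Hopf--Rinow. As $\iota$ is a homeomorphism and $\iota(\mathcal{R}\cap G^-)\subset\pi(\mathcal{R})=:\bar{\mathcal{R}}$, it is enough to show that $\bar{\mathcal{R}}$ is bounded in the distance $d_M$: then $\overline{\mathcal{R}\cap G^-}\subset\iota^{-1}(\overline{\bar{\mathcal{R}}})$ is compact.

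Next I would bound $\bar{\mathcal{R}}$. Put $Q=\{f_u(e):u\in U\}$, a compact set, and $\bar Q=\pi(Q)$. Iterating $\varphi(k,e,u)=f_{u_{k-1}}(e)f_0(\varphi(k-1,e,u))$ gives $\varphi(k,e,u)=f_{u_{k-1}}(e)\,f_0(f_{u_{k-2}}(e))\cdots f_0^{\,k-1}(f_{u_0}(e))$; applying $\pi$, using $\pi(g_1g_2)=\mathcal{L}_{g_1}\pi(g_2)$, $\pi\circ f_0^{\,j}=\bar{f}_0^{\,j}\circ\pi$, and the $\mathcal{L}$-invariance of $d_M$, a telescoping application of the triangle inequality yields, for all $k\in\N$ and $u\in\mathcal{U}$,
\[
d_M\!\left(\pi(e),\pi(\varphi(k,e,u))\right)\ \le\ \sum_{j=0}^{k-1}d_M\!\left(\pi(e),\bar{f}_0^{\,j}(\pi(f_{u_{k-1-j}}(e)))\right)\ \le\ \sum_{j=0}^{\infty}\ \max_{q\in\bar Q}\ d_M\!\left(\pi(e),\bar{f}_0^{\,j}(q)\right),
\]
so it remains to prove the convergence of this series.

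Then I would establish convergence using the spectral hypothesis. Transporting through $\iota$ and using $\bar{f}_0^{\,j}=\iota\circ(f_0|_{G^-})^{j}\circ\iota^{-1}$, together with the fact that $d_M$ pulls back under $\iota$ to a left-invariant Riemannian metric on $G^-$ (from $\iota\circ L_h=\mathcal{L}_h\circ\iota$ and the $G$-invariance of $d_M$), I get $d_M(\pi(e),\bar{f}_0^{\,j}(q))\le d_{G^-}\!\left(e,(f_0|_{G^-})^{j}(\iota^{-1}q)\right)$. Writing $\iota^{-1}q=\exp X_q$, so $(f_0|_{G^-})^{j}(\iota^{-1}q)=\exp((df_0)^{j}X_q)$, and evaluating the one-parameter subgroup $t\mapsto\exp(tY)$, one has $d_{G^-}(e,\exp Y)\le C_1\|Y\|$ with $C_1$ the norm of the metric at the identity. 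As $df_0|_{\mathfrak{g}^-}$ has spectral radius $\rho_0<1$, fix $\rho\in(\rho_0,1)$ and $C_0$ with $\|(df_0)^{j}X\|\le C_0\rho^{j}\|X\|$ on $\mathfrak{g}^-$; with $R=\max_{q\in\bar Q}\|X_q\|<\infty$ this gives $d_M(\pi(e),\bar{f}_0^{\,j}(q))\le C_0C_1R\,\rho^{j}$ for every $j\ge0$ and $q\in\bar Q$. Hence the series is dominated by $C_0C_1R\sum_{j\ge0}\rho^{j}<\infty$, so $\bar{\mathcal{R}}$ lies in a $d_M$-ball about $\pi(e)$, and the lemma follows.

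The hard part is the geometric step: converting the spectral condition on $df_0|_{\mathfrak{g}^-}$ into a bound for $d_M(\pi(e),\bar{f}_0^{\,j}(q))$ that decays exponentially in $j$ uniformly for $q$ in the compact set $\bar Q$. This relies on identifying $M$ with $G^-$ and on the observation that a $G$-invariant metric restricts there to a left-invariant metric, which is precisely where the hypotheses that $G$ is simply connected and nilpotent (so that $\mathfrak{g}^-$ is an ideal and $G=G^-\rtimes G^0$ as a manifold) are used.
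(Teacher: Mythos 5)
Your proof is correct and follows the same skeleton as the paper's: project to $M=G/G^{0}$, bound $\pi(\mathcal{R})$ by a convergent geometric series using the decay of $\bar{f}_{0}^{\,j}$ together with the fact that the left translations $\mathcal{L}_{g}$ are isometries of the $G$-invariant metric, and then transfer the bound back through the homeomorphism $\pi|_{G^{-}}:G^{-}\rightarrow M$ and the closedness of $G^{-}$. The genuine difference lies in how the decay is obtained. The paper imports the contraction estimate $\varrho(\bar{f}_{0}^{k}(x),\bar{f}_{0}^{k}(y))\leq c^{-1}\sigma^{k}\varrho(x,y)$ from \cite{CCS1} (a uniform bound on $\|(d\bar{f}_{0}^{k})_{eG^{0}}\|$) and then runs an induction on $k$ via the cocycle identity, whereas you make this step self-contained: you identify $M$ with $G^{-}$, observe that the $G$-invariant metric pulls back to a left-invariant metric there, and estimate $d_{G^{-}}(e,\exp((df_{0})^{j}X_{q}))\leq C_{1}\|(df_{0})^{j}X_{q}\|\leq C_{0}C_{1}\rho^{j}\|X_{q}\|$ directly from the spectral radius of $df_{0}|_{\mathfrak{g}^{-}}$. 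Your route avoids the cited lemma at the price of using that $\exp:\mathfrak{g}^{-}\rightarrow G^{-}$ is a diffeomorphism, i.e.\ the standing simple-connectedness and nilpotency hypotheses of the section (these do hold for the quotient groups to which the lemma is later applied, so nothing is lost); it also makes explicit the completeness/Hopf--Rinow point that the paper leaves implicit when asserting that $\overline{B_{\delta}(eG^{0})}$ is compact. Your telescoping of the product formula for $\varphi(k,e,u)$ is the same computation as the paper's induction, just unrolled.
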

	
	\begin{proof}
		Consider the induced linear system
		\begin{equation*}
			x_{k+1}=\bar{f}(x_{k},u_{k}),\ x_{k}\in M,\ u=(u_{i})_{i\in\mathbb{N}_{0}%
			}\in\mathcal{U}, \label{induced}%
		\end{equation*}
		on $M:=G/G^0$ with solutions denoted by $\bar{\varphi}$ that satisfies $\pi(\varphi(k,g,u))=\bar{\varphi}(k,\pi(g),u)$,
		for all $k\in\mathbb{N}_{0}$, $g\in G$ and $u\in\mathcal{U}$, where $\pi:G\rightarrow G/G^0$ denotes the standard projection (see end of Section \ref{section2}).
		
		If $\varrho$ is the distance induced by the $G$-invariant Riemannian metric on $M$, then given $x,y\in M$ and a smooth curve $\gamma:[0,1]\rightarrow M$ with $\gamma(0)=x$ and $\gamma(1)=y$ we have that $\bar{f}_0^k\circ\gamma$ is a smooth curve with $\bar{f}_0^k\circ\gamma(0)=\bar{f}_0^k(x)$ and $\bar{f}_0^k\circ\gamma(1)=\bar{f}_0^k(y)$. Hence
		
		$$\varrho(\bar{f}_0^k(x), \bar{f}_0^k(y))\leq\displaystyle\int_{0}^{1}|(d\bar{f}_0^k)_{\gamma(t)}\gamma^{\prime}(t)|dt.$$
		
		Note that $\|(d\bar{f}_0^k)_{gG^0}\|=\|(d\bar{f}_0^k)_{eG^0}\|$ holds for all $x\in M$ since the metric is $G$-invariant. From \cite[Remark 9 and Proposition 10]{CCS1} there exists a $c\geq1$ and $\sigma\in(0,1)$ such that
		\begin{eqnarray*}
			\varrho(\bar{f}_0^k(x), \bar{f}_0^k(y))\leq \displaystyle\int_{0}^{1}\|(d\bar{f}_0^k)_{o}\||\gamma^{\prime}(t)|dt\leq c^{-1}\sigma^k	\displaystyle\int_{0}^{1}|\gamma^{\prime}(t)|dt, \\
		\end{eqnarray*}
		that is, 
		\begin{eqnarray}\label{stable}
			\varrho(\bar{f}_0^k(x), \bar{f}_0^k(y))\leq c^{-1}\sigma^k\varrho(x,y), \ \forall \ k\in\mathbb{N}.
		\end{eqnarray}
		Moreover, the solutions $\bar{\varphi}$ satisfy
		\begin{eqnarray}\label{cocycle}
			\bar{\varphi}(k+l, eG^0, u)=\mathcal{L}_{\varphi(k,e,\Theta_k(u))}(\bar{f}_0^k(\bar{\varphi}(l, eG^0, u))).
		\end{eqnarray}
		
		Now we prove by induction on $k$ that 
		$$\varrho(\bar{\varphi}(k,eG^0,u),eG^0)\leq c^{-1}a\displaystyle\sum_{i=0}^{k-1}\sigma^i,$$
		where $a:=\displaystyle\max_{u\in U}\varrho(\bar{f}_u(eG^0), eG^0)$. Assume without loss of generality that $c^{-1}\geq 1$.
		For $k=1$, it follows from (\ref{stable}) that
		\begin{eqnarray*}
			\varrho(\bar{\varphi}(1,eG^0,u),eG^0)=\varrho(\bar{f}_{u}(eG^0),eG^0)\leq c^{-1}a.
		\end{eqnarray*}
		Suppose that the assertion holds for $n\geq 1$. Equality (\ref{cocycle}) yields
		$$\bar{\varphi}(k+1,eG^0,u)=\mathcal{L}_{\varphi(k,e,\Theta_k(u))}(\bar{f}_{0}^k(\bar{f}_{u}(eG^0))),$$
		consequently,
		\begin{eqnarray*}
			\varrho(\bar{\varphi}(k+1,eG^0,u),eG^0)&\leq&\varrho(\mathcal{L}_{\varphi(k,e,\Theta_k(u))}(\bar{f}_{0}^k(\bar{f}_{u}(eG^0))), \mathcal{L}_{\varphi(k,e,\Theta_k(u))}(eG^0))\\
			&+&\varrho(\mathcal{L}_{\varphi(k,e,\Theta_k(u))}(eG^0), eG^0)\\
			&=&\varrho(\bar{f}_{0}^k(\bar{f}_{u}(eG^0)), eG^0)+\varrho(\bar{\varphi}(k,eG^0,u),eG^0),
		\end{eqnarray*}
		because $\mathcal{L}_g$ is an isometry and $\mathcal{L}_g(eG^0)=\pi(g)$ for all $g\in G$. By (\ref{stable}) and the inductive hypothesis we have
		\begin{eqnarray*}
			\varrho(\bar{\varphi}(k+1,eG^0,u),eG^0)&\leq&\varrho(\bar{f}_{0}^k(\bar{f}_{u}(eG^0)), eG^0)+\varrho(\bar{\varphi}(k,eG^0,u),eG^0)\\
			&\leq&c^{-1}\sigma^k\varrho(\bar{f}_u(eG^0),eG^0)+c^{-1}a\displaystyle\sum_{i=0}^{k-1}\sigma^i\\
			&\leq&c^{-1}a\sigma^k+c^{-1}a\displaystyle\sum_{i=0}^{k-1}\sigma^i=c^{-1}a\displaystyle\sum_{i=0}^{k}\sigma^i.
		\end{eqnarray*}
		
		Since $\sigma\in(0,1)$, one has $$\pi(\mathcal{R})=\pi\left(\displaystyle\bigcup_{\tau\in \N}\mathcal{R}_\tau\right)=\displaystyle\bigcup_{\tau\in \N}\pi(\mathcal{R}_\tau)\subset\overline{B_\delta(eG^0)},$$
		where $\delta=c^{-1}a\displaystyle\sum_{i=0}^{\infty}\sigma^i<\infty$. Hence $\pi(\mathbb{R})$ is relatively compact in $M$. Note also that $\pi|_{G^-}:G^-\rightarrow M$ is a homeomorphism because $G^{-,0} = G^-\cap G^0=\{e\}$ (c.f. \cite[Lemma 3.6]{AyalaeRomaneAdriano}). Therefore, $\mathcal{R}_{G^-}$ is relatively compact in $G^-$. This result follows because $G^-$ is closed in $G$.
	\end{proof}
	
	The next proposition establishes a necessary and sufficient condition for controllability, using the reachable set from the identity $\mathcal{R}$. 
	
	\begin{proposition}\label{propreachablefrome}
		Assume that $\mathcal{R}$ is open. Then $\mathcal{R}=G$ if, and only if, $G=G^{+,0}$.
	\end{proposition}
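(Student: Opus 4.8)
The backward implication is immediate: if $G=G^{+,0}$, then Theorem \ref{teoG^+00} (which applies since $G$ is solvable and $\mathcal{R}$ is assumed open) gives $G=G^{+,0}\subset\mathcal{R}\subset G$, hence $\mathcal{R}=G$. So the content is the forward implication, which I would prove by contraposition: assuming $\mathfrak{g}^-\neq\{0\}$ (equivalently $G^-\neq\{e\}$, equivalently $G\neq G^{+,0}$) I want to show $\mathcal{R}\neq G$.

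The plan is an induction on $\dim G$, propagating the eigenvalue decomposition through quotients by means of Lemma \ref{invdyngroup}. Suppose $\mathfrak{g}$ is non-abelian and let $\mathfrak{n}$ be the last nonzero term of the lower central series of $\mathfrak{g}$; it is a nonzero central ideal, and being characteristic it is $df_0$-invariant, so the corresponding connected subgroup $N$ is closed (as $G$ is simply connected nilpotent), connected, normal and $f_0$-invariant. If moreover $\mathfrak{g}^-\not\subset\mathfrak{n}$, I pass to the induced linear system on $G/N$: it is again a connected, simply connected nilpotent Lie group of strictly smaller dimension, its reachable set is $\pi(\mathcal{R})$ (hence open, since $\pi$ is an open map and $\mathcal{R}$ is open), and by Lemma \ref{invdyngroup} its stable subalgebra is $\pi(\mathfrak{g}^-)\neq\{0\}$; the induction hypothesis then yields $\pi(\mathcal{R})\neq G/N$, whence $\mathcal{R}\neq G$.

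It remains to handle the case where $\mathfrak{g}^-$ lies inside every nonzero $df_0$-invariant ideal — in particular inside the center of $\mathfrak{g}$; this also subsumes the base case $\mathfrak{g}$ abelian. Then $\mathfrak{g}^-$ is a central ideal, and since $\mathfrak{g}^{+,0}$ is a subalgebra with $[\mathfrak{g}^{+,0},\mathfrak{g}^-]=0$ we get $\mathfrak{g}=\mathfrak{g}^{+,0}\oplus\mathfrak{g}^-$ as a direct sum of ideals, so $G\cong G^{+,0}\times G^-$ and $df_0$, hence $f_0$, splits as a product over the two factors. Writing $f_u(e)=(a_u,b_u)$ in this product, the system decouples, and its $G^-$-component is the linear system $x_{k+1}=Ax_k+b_{u_k}$ on $G^-\cong\R^{\dim\mathfrak{g}^-}$, where $A=df_0|_{\mathfrak{g}^-}$ has spectral radius strictly less than $1$. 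Since $U$ is compact and $u\mapsto b_u$ is continuous, $\{b_u:u\in U\}$ is bounded, so $\bigl|\sum_{j=0}^{k-1}A^{k-1-j}b_{u_j}\bigr|$ is bounded uniformly in $k$ and $u$; thus the reachable set of the $G^-$-component is bounded, the projection of $\mathcal{R}$ onto the factor $G^-$ is bounded, and as $G^-$ is noncompact we conclude $\mathcal{R}\neq G$. (Equivalently, in this last situation the relative compactness of $\mathcal{R}\cap G^-$ is the conclusion of Lemma \ref{relativelycompact} applied on $G^-$, where the center subgroup is trivial and a left-invariant metric always exists.)

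I expect the second, ``central'' case to be the main obstacle: the quotient reduction stalls precisely because $\mathfrak{g}^-$ then survives in no proper $df_0$-invariant quotient, and one genuinely needs the structural fact that $\mathfrak{g}^{+,0}$ becomes a complementary ideal so that the dynamics splits off an honest contraction on $G^-$. A secondary point requiring care is checking that the induced system on $G/N$ is again a linear system on a simply connected nilpotent group with open reachable set equal to $\pi(\mathcal{R})$, and that Lemma \ref{invdyngroup} indeed identifies its stable subalgebra with $\pi(\mathfrak{g}^-)$.
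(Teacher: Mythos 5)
Your argument is correct, and it reaches the conclusion by a genuinely different route from the paper's. Both proofs are inductions on $\dim G$ that funnel everything into the case where $\mathfrak{g}^-$ is central, but they diverge in how they get there and, more importantly, in the endgame. The paper quotients by the full center $Z(G)$ (nontrivial by nilpotency), applies the induction hypothesis to conclude $H=H^{+,0}$ and hence, via Lemma \ref{invdyngroup}, that $G^-\subset Z(G)$; it then passes to $G/G^+$ and invokes Lemma \ref{relativelycompact} — whose hypothesis of a $G$-invariant metric on the relevant quotient is available precisely because $\mathrm{Ad}(G^0)|_{\mathfrak{g}^-}$ is trivial when $G^-$ is central — to get $G^-$ relatively compact, hence compact (closed in a simply connected nilpotent group) and therefore trivial by Proposition \ref{compactsubgroup}. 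You instead quotient by the last nonzero term of the lower central series so long as $\mathfrak{g}^-$ survives in the quotient, and in the terminal central case you exploit the ideal splitting $\mathfrak{g}=\mathfrak{g}^{+,0}\oplus\mathfrak{g}^-$ to decouple the dynamics into a product, reducing the $G^-$-factor to a Euclidean linear system with contracting $A$ whose reachable set is bounded by a geometric series; since $G^-\cong\R^{\dim\mathfrak{g}^-}$ is unbounded, $\mathcal{R}\neq G$. Your endgame is more elementary and self-contained — it bypasses Lemma \ref{relativelycompact}, the invariant-metric machinery, and Proposition \ref{compactsubgroup} entirely — at the price of needing the simply connected product decomposition $G\cong G^{+,0}\times G^-$, which is where nilpotency and simple connectedness enter for you. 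One small imprecision: the complement of your first case only gives $\mathfrak{g}^-\subset\mathfrak{n}\subset\mathfrak{z}(\mathfrak{g})$, not that $\mathfrak{g}^-$ lies in \emph{every} nonzero $df_0$-invariant ideal; since your argument only uses centrality of $\mathfrak{g}^-$, this is harmless, but the phrasing should be corrected.
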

	
	\begin{proof}
		If $G=G^{+,0}$, then Theorem \ref{teoG^+0} implies $\mathcal{R}=G$. Reciprocally, assume $\mathcal{R}=G$. The proof that $G=G^{+,0}$   proceed by induction on $\dim G$. If $\dim G=1$, then $G$ is abelian and we can conjugate the system to its linearization by an exponential map, consequently the result follows from \cite[Corollary 16 (i)]{CCS}. Suppose that the assertion holds for any nilpotent connected and simply connected Lie group with dimension smaller than $d$. Consider a Lie group $G$ satisfying the hypothesis with $\dim G=n$. Since the center of a nilpotent Lie group is non-trivial, the Lie group $H:=G/Z(G)$ is nilpotent, connected and $\dim H<d$. 
		
		The $f_0$-invariance of $Z(G)$ implies that the induced linear system on $H$ is well-defined. By hypothesis, $\mathcal{R}=G$ yields $\pi(\mathcal{R})=H$, where $\pi:G\rightarrow H$ is the standard projection. By induction we have $H=H^{+,0}$. Lemma \ref{invdyngroup} implies that $G^-\subset Z(G)$ and by Proposition \ref{decomposable} we find that $G^+$ is a normal subgroup of $G$. If $\mathcal{R}^{-,0}$ denotes the reachable set of  the induced linear system on $G^{-,0}=G/G^+$, then $\mathcal{R}^{-,0}=G^{-,0}$. Moreover, $G^{-,0}/G^0$ admits a $G$-invariant Riemannian metric because $G^-\subset Z(G)$.  Then, $\text{Ad}(G^0)|_{\mathfrak{g}^-}=\{\text{id}_{\mathfrak{g}^-}\}$ is compact in $\text{Gl}(\mathfrak{g}^-)$. From Lemma \ref{relativelycompact}, $\mathcal{R}^{-,0}\cap G^-=G^-$ is relatively compact in $G$. As $G^-$ is closed then it is compact. Hence by Proposition \ref{compactsubgroup} $G^-=\{e\}$, therefore $G=G^{+,0}$.
	\end{proof}
	
	\begin{remark} In the proof of Proposition (\ref{propreachablefrome}), the assumption that $G$ is nilpotent is employed to establish the center, Z(G), as a non-trivial set.
	\end{remark}
	
	Finally, the main result of this section can be stated.
	
	\begin{theorem}\label{controlnilpotent}
		Consider the linear system (\ref{linearcontrolsystem}) on a nilpotent connected and simply connected Lie group $G$. Then, the system is controllable if, and only if, $\mathcal{R}$ and $\mathcal{C}$ are open and $G=G^0$.
	\end{theorem}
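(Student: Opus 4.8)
The plan is to prove both implications by assembling the results established earlier in the paper. For the forward direction (controllability $\Rightarrow$ the stated conditions), suppose the system (\ref{linearcontrolsystem}) is controllable, i.e. $\mathcal{R}(g)=\mathcal{C}(g)=G$ for every $g\in G$; in particular $\mathcal{R}=\mathcal{R}(e)=G$ and $\mathcal{C}=\mathcal{C}(e)=G$, so both are trivially open. By Proposition \ref{propreachablefrome} applied to the system, $\mathcal{R}=G$ forces $G=G^{+,0}$. Now I apply the same proposition to the reversed counterpart (\ref{inverselinearcontrolsystem}): its reachable set from the identity is $\mathcal{R}^*=\mathcal{C}=G$ by Lemma \ref{setsinvsys}, and it is also a discrete-time linear system on the same nilpotent, connected, simply connected group $G$; hence $G=G^{*+,0}$. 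But recall from the discussion of the reversed system that $G_*^+=G^-$ and $G_*^0=G^0$, so $G^{*+,0}=G^{-,0}$. Therefore $G=G^{+,0}\cap G^{-,0}$. Since $\mathfrak{g}^{+,0}\cap\mathfrak{g}^{-,0}=\mathfrak{g}^0$, this intersection has Lie algebra $\mathfrak{g}^0$, whence $G=G^0$ (using that in a simply connected nilpotent group connected subgroups are closed and determined by their Lie algebras).

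For the converse, assume $\mathcal{R}$ and $\mathcal{C}$ are open and $G=G^0$. This is precisely the hypothesis of Theorem \ref{theoremcontrol}: since $G$ nilpotent is in particular solvable, Theorem \ref{theoremcontrol} yields that the system is controllable. Actually one should be slightly careful that Theorem \ref{theoremcontrol} gives controllability from $e$, i.e. $G=\mathcal{R}\cap\mathcal{C}$; to get full controllability between arbitrary points one uses Proposition \ref{reachablesetprop} item 3, namely $\mathcal{R}_k(g)=\mathcal{R}_k f_0^k(g)$, together with the corresponding statement for $\mathcal{C}$, so that $\mathcal{R}(g)=G$ and $\mathcal{C}(g)=G$ for all $g$, which is the controllability of (\ref{linearcontrolsystem}).

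The main obstacle is the forward direction, and within it the step $G=G^{+,0}\cap G^{-,0}\Rightarrow G=G^0$. One must verify that applying Proposition \ref{propreachablefrome} to the reversed system is legitimate — this requires checking that (\ref{inverselinearcontrolsystem}) genuinely satisfies Definition \ref{deflincontsys} (which was noted right after its introduction: $\tilde f_0=f_0^{-1}$ is an automorphism and $\tilde f_u(h)=\tilde f_u(e)\tilde f_0(h)$), that its reachable set is open (it equals $\mathcal{C}$, which is assumed open), and that the group is unchanged, so the hypotheses of Proposition \ref{propreachablefrome} are met. Then the identification of the dynamical subgroups of $d\tilde f_0=df_0^{-1}$ with those of $df_0$ (swapping $+$ and $-$, fixing $0$) is exactly the remark made in Section \ref{section2}. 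Finally, passing from the equality of Lie algebras $\mathfrak{g}=\mathfrak{g}^{+,0}\cap\mathfrak{g}^{-,0}=\mathfrak{g}^0$ to the equality of groups $G=G^0$ uses the standing assumption that $G$ is connected and simply connected nilpotent, so that $G^0$, being the connected subgroup with Lie algebra $\mathfrak{g}^0=\mathfrak{g}$, must be all of $G$.
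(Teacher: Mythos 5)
Your proof is correct and follows essentially the same route as the paper: the converse direction is exactly Theorem \ref{theoremcontrol}, and the forward direction applies Proposition \ref{propreachablefrome} to both $\mathcal{R}$ and (via Lemma \ref{setsinvsys}) to the reachable set $\mathcal{R}^*=\mathcal{C}$ of the reversed system, yielding $G=G^{+,0}$ and $G=G^{-,0}$ and hence $G=G^0$. Your additional care in verifying that the reversed system is again a linear system on the same nilpotent group and in passing from the Lie-algebra identity $\mathfrak{g}^{+,0}\cap\mathfrak{g}^{-,0}=\mathfrak{g}^0$ to the group-level equality only makes explicit what the paper leaves implicit.
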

	
	\begin{proof}
		If we assume that $\mathcal{R}$ and $\mathcal{C}$ are open and $G=G^0$, then the result follows from Theorem \ref{theoremcontrol}. However, if the system is controllable, then $\mathcal{R}$ and $\mathcal{C}$ are clearly open. Moreover, since $G=\mathcal{R}\cap\mathcal{C}$, we can apply Proposition \ref{propreachablefrome} to $\mathcal{R}$ and $\mathcal{R}^*=\mathcal{C}$  to conclude that $G=G^0$.
	\end{proof}
	
	\begin{remark}
		Theorem \ref{controlnilpotent} generalizes \cite[Corollary 16 (iii)]{CCS} regarding the controllability of linear systems on euclidean spaces.
	\end{remark}
	
	\begin{example}
		Consider the Heisenberg group 
		$$\mathbb{H}=\left\{\begin{bmatrix}
			1 & x_2 & x_1\\
			0 & 1 & x_3\\
			0 & 0 & 1
		\end{bmatrix};\ x_1,x_2, x_3\in\mathbb{R}\right\},$$
		which is diffeomorphic to $\mathbb{R}^{3}$
		with the product
		$$(x_{1},x_{2},x_{3})\cdot(y_{1},y_{2},y_{3})=(x_{1}+y_{1}+x_{2}y_{3},x_{2}+y_{2},x_{3}+y_{3}).$$
		Let $U$ be a compact and connected neighborhood of $0\in\mathbb{R}$ and $f:\mathbb{H}\times U\rightarrow\mathbb{H}$ given by
		$$f_{u}(x_{1},x_{2},x_{3})=\left(  x_{1}+x_{2}+\dfrac{x_{2}^{2}}{2}
		+ux_{2}+ux_{3}-\dfrac{u}{2}-\dfrac{u^{2}}{3},x_{2}+u,x_{2}+x_{3}-\dfrac{u}
		{2}\right).$$
		It is not difficult to see that 
		\begin{eqnarray}\label{heisenberg}
			g_{k+1}=f_{u_k}(g_{k}), \ u_k\in U
		\end{eqnarray}
		is a linear system on $\mathbb{H}$ (see \cite[Example 6]{CCS1}). Note that 
		$$f_u^{-1}(x_1,x_2,x_3)=\left(x_1-x_2-\frac{x_2^2}{2}+ux_2-ux_3+\frac{3u}{2}-\frac{2u^2}{2},x_2-u,-x_2+x_3-\frac{3u}{2}\right).$$
		Since each coordinate of both $f_u(0,0,0)$ and $f_u^{-1}(0,0,0)$ are polynomials at $u$ with root $u=0$ and non-null derivative in $u=0$, there is $\varepsilon>0$ such that $B_\varepsilon(0,0,0)\subset f_{\mathrm{int}U}(0,0,0)\cap f_{\mathrm{int}U}^{-1}(0,0,0)$ is a neighborhood of $(0,0,0)$ in $\mathcal{R}$ and $\mathcal{R}^*$, respectively. According to item (6) of Proposition \ref{reachablesetprop}, $\mathcal{R}$ and $\mathcal{R}^*=\mathcal{C}$ are open. Moreover, the unique eigenvalue of $df_0$ is $1$ (see \cite[Example 20]{CCS1}), hence $G=G^0$. Then Theorem \ref{controlnilpotent}, guarantees that the system (\ref{heisenberg}) is controllable.
	\end{example}
	
	\section{Declarations}

	\subsection{Conflict of interest/Competing interests}
	
	The authors declare that they have NO
	affiliations with or involvement in any organization or entity with any
	financial interest in the subject matter or materials discussed in this
	manuscript.
	
	\subsection{Ethics approval}
	
	Not applicable

	\subsection{Availability of data and materials}
	
	Not applicable
	
	\subsection{Code availability}
	
	Not applicable
	
	\subsection{Authors' contributions}
	
	All authors contributed to all sections. All authors reviewed the final manuscript.

\end{document}